\theoremstyle{definition}
\newcommand{\C}{\mathbb{C}}
\newtheorem{proposition}{Proposition}[section]
\newtheorem{theorem}[proposition]{Theorem}
\newtheorem{definition}[proposition]{Definition}
\newtheorem{lemma}[proposition]{Lemma}
\newtheorem{corollary}[proposition]{Corollary}
\newtheorem{remark}[proposition]{Remark}
\begin{document}
	
	\title{A note on embedding of achiral Lefschetz fibrations}
	
	\subjclass{Primary: 57R40. Secondary: 57M50, 57R17.}
	
	\keywords{Achiral Lefschetz fibration, Open book, Embedding.}

	\author{Arijit Nath}
	\address{Indian Institute of Technology Madras}
	\email{arijit2357@gmail.com}
	
	\author{Kuldeep Saha}
	\address{Indian Institute of Science Education and Research Bhopal}
	\email{kuldeep.saha@gmail.com}
	
	\begin{abstract}
		We discuss $4$-dimensional achiral Lefschetz fibrations bounding $3$-dimensional open books and study their Lefschetz fibration (LF) embedding in a bounded $6$-dimensional manifold, in the sense of Ghanwat--Pancholi. As an application we give another proof of the fact that every closed orientable $4$-manifold embeds in $S^2 \times S^2 \times S^2$. We also prove that every achiral Lefschetz fibration with hyperelliptic monodromy admits LF embedding in $D^6 = D^2 \times D^4$ and discuss an obstruction to such LF embeddings.
	\end{abstract}
	
	\maketitle

	\section{Introduction}
	
	In recent times, various embedding problems for manifolds with extra geometric structures have been investigated. In particular, the study of embeddings in the category of open books and Lefschetz fibrations has seen some progress. The open book case was studied in \cite{EL},\cite{PPS} and \cite{S}. Recently, Ghanwat and Pancholi have proved the existence of embedding of any oriented closed $4$-manifold in both $S^2 \times S^2 \times S^2$ and $\mathbb{C}P^3$. In the present article, we prove a relative version of the main Theorem (see Theorem \ref{GP main theorem}) in \cite{GP} to show the following.
	
		\begin{theorem}\label{main theorem} Let $V^4 = LF(\Sigma,\phi).$	$V^4$ admits a relative LF embedding in $(S^2 \times S^2 \setminus D^4) \times D^2.$
		\end{theorem}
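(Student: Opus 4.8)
The plan is to realize $V^4 = LF(\Sigma,\phi)$ explicitly as a handlebody over the disk and then build the embedding fiberwise, reducing the relative statement to extending the monodromy across a trivial fibration. Write the base as $D^2$ with critical values $p_1,\dots,p_n$, so that $V^4$ is obtained from $\Sigma\x D^2$ by attaching Lefschetz $2$-handles along vanishing cycles $c_1,\dots,c_n$ lying in the fibers over $\bd D^2$, each carrying a sign $\epsilon_i=\pm 1$ (positive, resp.\ negative, critical point), since $\phi=\tau_{c_n}^{\epsilon_n}\cdots\tau_{c_1}^{\epsilon_1}$ is a product of Dehn twists of both signs. The target $(S^2\x S^2\setminus D^4)\x D^2$ is the trivial fibration over $D^2$ with fiber the compact, simply connected $4$-manifold $F:=S^2\x S^2\setminus D^4$, $\bd F \iso S^3$. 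In these terms a relative LF embedding is a fiber-preserving embedding $V^4\inj F\x D^2$ which on a regular fiber is a fixed proper embedding $j\colon\Sigma\inj F$ and which sends each Lefschetz singularity of $V^4$ to a fold of $j(\Sigma)$ inside the (smooth) fiber $F$; concretely, the vanishing cycle $c_i$ must bound a Lefschetz thimble, i.e.\ an embedded framed disk, in $F$.

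First I would fix the fiber embedding. Because $\Sigma$ is a compact surface with $\bd\Sigma\ne\varnothing$, it embeds properly in a collar $D^4\subset F$ as a pushed-in Seifert surface; choose $j$ so that every simple closed curve on $j(\Sigma)$ is unknotted in $F$. For each $i$ I then produce an embedded disk $\Delta_i\subset F$ with $\bd\Delta_i=j(c_i)$ and $\Delta_i\cap j(\Sigma)=j(c_i)$. The normal framing that $\Delta_i$ induces on $c_i$ must differ from the framing coming from $\Sigma$ by exactly the amount prescribed by the sign $\epsilon_i$ of the Lefschetz handle. This is precisely where the factor $S^2\x S^2$ is used: tubing $\Delta_i$ to a parallel copy of the sphere $[S^2\x \mathrm{pt}]$ or of $[\mathrm{pt}\x S^2]$ alters its normal data, and the presence of classes of self-intersection of either sign lets me realize the thimble framings required by positive \emph{and} negative vanishing cycles. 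This is the step I expect to be the main obstacle: arranging all the $\Delta_i$ to be embedded, to meet $j(\Sigma)$ only along their boundaries, and to carry exactly the Lefschetz framing dictated by $\epsilon_i$ — in other words, matching the self-intersection bookkeeping in $S^2\x S^2\setminus D^4$ with the achiral handle data.

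Given such a disk $\Delta_i$, the Dehn twist $\tau_{c_i}^{\epsilon_i}$ extends to a compactly supported diffeomorphism $\Phi_i$ of $F$, obtained by spinning a tubular neighborhood of $\Delta_i$ through angle $2\pi$; since $\Delta_i$ bounds and has the prescribed framing, $\Phi_i$ restricts to $\tau_{c_i}^{\epsilon_i}$ on $j(\Sigma)$ and is isotopic to $\mathrm{id}_F$ rel $\bd F$. I then assemble the embedding over $D^2$: split $D^2$ into the regions separated by arcs through $p_1,\dots,p_n$, embed $\Sigma\x(\text{region})$ by $j\x\mathrm{id}$, and glue across the $i$-th wall using $\tau_{c_i}^{\epsilon_i}$ downstairs and $\Phi_i$ upstairs. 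The isotopies $\Phi_i\simeq\mathrm{id}_F$ rel $\bd F$ guarantee that these local fiber-preserving embeddings patch to a global embedding $V^4\inj F\x D^2$ of the total space, the desired relative LF embedding; this is the relative analogue of Theorem \ref{GP main theorem}, cf.\ \cite{GP}.

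Finally I would check the boundary. Restricting the construction to $\bd V^4=OB(\Sigma,\phi)$ yields a fiber-preserving embedding of this open book into $\bd\big((S^2\x S^2\setminus D^4)\x D^2\big)$, which should match the open book embeddings produced in \cite{EL,PPS,S}; verifying this compatibility makes the embedding genuinely relative. The key difficulty throughout remains the simultaneous control of the bounding disks $\Delta_i$ and their framings, so I would carry out the construction one critical value at a time, using that each $\Phi_i$ is isotopic to the identity to reset the fiber embedding to $j$ before treating the next vanishing cycle.
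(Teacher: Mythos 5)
Your overall architecture --- fix a proper embedding of the fiber in $F=S^2\times S^2\setminus D^4$, extend each Dehn twist $\tau_{c_i}^{\epsilon_i}$ to an ambient diffeomorphism isotopic to the identity, patch the resulting mapping tori over a decomposed base, and extend across the critical points using embedded disks bounded by the vanishing cycles --- is exactly the paper's (Lemma \ref{main lemma} via Lemma \ref{rel GP1}, following \cite{GP}). The genuine gap is at the step you yourself flag as the main obstacle, and the mechanism you propose for it fails. The factor spheres $S^2\times\mathrm{pt}$ and $\mathrm{pt}\times S^2$ have self-intersection $0$, not ``either sign,'' and more fatally the intersection form of $S^2\times S^2$ is \emph{even}: tubing $\Delta_i$ into any embedded sphere $S$ changes the framing induced on $c_i=\partial\Delta_i$ by $[S]\cdot[S]+2\,[\Delta_i]\cdot[S]$, which is always even. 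The Lefschetz local model requires the disk framing to be surface framing $\mp 1$, i.e.\ \emph{odd} relative to the surface framing, so no amount of tubing (even with the diagonal and antidiagonal spheres of square $\pm 2$) can produce or correct these framings; it only moves you within a parity class. In the paper the $\mp 1$ framings are never obtained by correcting a disk: they are built in from the start by plumbing a Hopf band onto the standardly embedded surface, so that the relevant curves come with Hopf annulus neighborhoods and disks realizing the local model of Definition \ref{def:standard position}; the two signs $\epsilon_i=\pm 1$ are then both covered because the local charts of an achiral Lefschetz fibration are not required to be orientation-preserving, so a single Hopf model handles chiral and achiral points alike --- no per-handle framing bookkeeping occurs at all.

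A second, related gap: ``choose $j$ so that every simple closed curve on $j(\Sigma)$ is unknotted in $F$'' is not a condition you can simply impose (and circles never knot in a $4$--manifold anyway); the real requirement is that each vanishing cycle bounds an embedded disk meeting $j(\Sigma)$ \emph{only} along the curve and with the correct local model, and for an arbitrary curve on a fixed embedded surface this fails --- e.g.\ a curve knotted in the $S^3$-level of the collar has no such clean pushed-in disk. The paper resolves this not curve-by-curve but by a reduction: the disk-plus-Hopf-band condition is verified only for Lickorish generators and for curves missing every handle, and then flexibility of the embedding together with Lickorish's Proposition \ref{lickorish} moves an arbitrary vanishing cycle to such a curve by an ambient diffeomorphism relatively isotopic to the identity (this is also where your assertion that each $\Phi_i$ is isotopic to $\mathrm{id}_F$ is actually justified, via Lemma 15 of \cite{PPS}). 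The separable Hopf link hypothesis is what supplies both the Hopf annulus and the two disjoint clean disks in the complement. With that reduction and the Hopf-band framings in place, your one-critical-value-at-a-time assembly is precisely the paper's proof; without them, the central step of your proposal does not go through.
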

	
\noindent Here, $LF(\Sigma,\phi)$ describes an achiral Lefschetz fibration with fiber $\Sigma$ and $\phi$ is a representation of an element in $\mathcal{MCG}(\Sigma,\partial \Sigma)$. For exact definitions we refer to section \ref{LF def}.
	
\	

Using Theorem \ref{main theorem}, we give an alternate proof of the following fact proved in \cite{GP}. 

\begin{corollary}\label{corollary}
	Every closed orientable $4$-manifold embeds in $S^2 \times S^2 \times S^2$.
\end{corollary}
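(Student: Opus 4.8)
The plan is to realize $X$ as a union of two achiral Lefschetz fibrations over the disk, embed each piece by Theorem \ref{main theorem}, and then glue the two embeddings together inside $S^2 \times S^2 \times S^2$.

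First I would invoke the classical fact that every closed orientable $4$-manifold $X^4$ admits an achiral Lefschetz fibration over $S^2$. Writing $S^2 = D^2_+ \cup_{S^1} D^2_-$ and restricting the fibration to the two hemispheres expresses $X$ as $X = V_+ \cup_\partial V_-$, where each $V_\pm = LF(\Sigma,\phi_\pm)$ is an achiral Lefschetz fibration over the disk with regular fiber $\Sigma$, and where the induced open books $OB(\Sigma,\phi_\pm)$ on the two boundaries match. Indeed, since the total monodromy of a fibration over $S^2$ is trivial, one has $\phi_+\phi_- = \mathrm{id}$ in $\mathcal{MCG}(\Sigma,\partial\Sigma)$, so $\partial V_-$ is diffeomorphic to $\partial V_+$ with reversed orientation via an open-book-preserving map; this is exactly the gluing that reassembles $X$.

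Next I would apply Theorem \ref{main theorem} to each half to obtain relative LF embeddings $V_\pm \hookrightarrow N_\pm$, where $N_\pm := (S^2\times S^2\setminus D^4)\times D^2$. The key geometric observation is that these two model pieces assemble to a subset of the target. Writing $W = S^2\times S^2\setminus D^4$ (so $\partial W = S^3$) and combining $S^2\times S^2 = W\cup_{S^3} D^4$ with $S^2 = D^2_+\cup_{S^1} D^2_-$, one gets
\[
N_+ \cup_{W\times S^1} N_- \;=\; W\times(D^2_+\cup_{S^1} D^2_-) \;=\; W\times S^2 \;=\; (S^2\times S^2\setminus D^4)\times S^2 ,
\]
which sits inside $S^2\times S^2\times S^2$ as the complement of $D^4\times S^2$. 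Thus the two disk-bundle models are glued along $W\times S^1$, precisely the ``vertical'' boundary component $(S^2\times S^2\setminus D^4)\times S^1$ of each $N_\pm$, while the remaining boundary part $S^3\times D^2$ stays free.

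Finally I would glue the two relative embeddings along this common wall. By the relative nature of the embedding in Theorem \ref{main theorem}, the boundary open book $OB(\Sigma,\phi_\pm)$ is carried into $W\times S^1$ as a standard open-book embedding. Matching the two boundary embeddings under the identification $\partial V_+ \cong \partial V_-$ and under the gluing $N_+ \cup_{W\times S^1} N_-$ then yields a single embedding $X = V_+\cup V_- \hookrightarrow (S^2\times S^2\setminus D^4)\times S^2 \subset S^2\times S^2\times S^2$. The main obstacle is exactly this matching step: one must verify that the open-book embedding supplied on $\partial V_+$ agrees, up to an ambient isotopy of $S^2\times S^2\times S^2$, with the orientation-reversed one supplied on $\partial V_-$. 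This forces the boundary model produced in the proof of Theorem \ref{main theorem} to be standard and essentially canonical, so that the isotopy classes of the two induced open-book embeddings into $W\times S^1$ coincide; once this compatibility is in hand the two halves glue, completing the proof.
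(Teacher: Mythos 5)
The decisive gap is your very first step. It is not a classical fact that every closed orientable $4$-manifold admits an achiral Lefschetz fibration over $S^2$: the Etnyre--Fuller theorem produces an ALF only after surgery along a circle in $X$, and it is precisely to avoid this unavailable input that they prove the splitting result quoted in the paper as Theorem \ref{EF}, namely $X = Y_1 \cup Y_2$ with each $Y_i$ an ALF over $D^2$ with \emph{bounded} fibers and with the induced open books on $\partial Y_1 = -\partial Y_2$ coinciding. This is exactly what the paper's proof of Corollary \ref{corollary} uses in place of your fibration over $S^2$. Your version is moreover internally inconsistent: if a closed $X$ fibered over $S^2$, the fibers would have to be closed (a bounded fiber $\Sigma$ would force $\partial X$ to be a nonempty $\partial\Sigma$-bundle over $S^2$), so the monodromy would live in $\mathcal{MCG}(\Sigma)$ of a closed surface; yet you take $\phi_\pm \in \mathcal{MCG}(\Sigma,\partial\Sigma)$, speak of boundary open books with bounded pages, and feed the halves into Theorem \ref{main theorem}, which is stated for bounded-fiber ALFs. (Had closed-fiber ALFs over $S^2$ always existed, the corollary would follow immediately from Theorem \ref{GP main theorem} with $N = S^2\times S^2$ and no relative theory at all; that both \cite{GP} and this paper work harder signals that this input is missing.) Note also that in the Etnyre--Fuller splitting the two monodromies are not inverse to each other; the boundaries match through an orientation-reversing, open-book-preserving identification, not because a ``total monodromy over $S^2$'' is trivial, so your $\phi_+\phi_-=\mathrm{id}$ bookkeeping is not how the decomposition arises.

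Beyond that, your assembly of the target is essentially the paper's ($W\times D^2_+ \cup_{W\times S^1} W\times D^2_- = W\times S^2 \subset S^2\times S^2\times S^2$ with $W = S^2\times S^2\setminus D^4$), and like the paper you correctly identify that the boundary identification must be realized by an ambient isotopy, which is what flexibility supplies (the paper's homeomorphism $u$ and isotopy $\Phi_t$). But your remark that the boundary part $S^3\times D^2$ ``stays free'' conceals a real problem in the bounded-fiber setting: the embedded open book $\partial Y_i$ is \emph{not} contained in $W\times S^1$. Its mapping-torus part lies there, but the binding together with its disk collar lies in $S^3\times D^2_\pm = \partial W\times D^2_\pm$, and these are disjoint subsets of $W\times S^2$ for the two halves. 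Gluing only along $W\times S^1$ therefore identifies the mapping-torus parts of $\partial Y_1$ and $\partial Y_2$ but not their binding regions, so the union of the two images is $X$ cut open along neighborhoods of the bindings rather than $X$ itself. To close up the manifold you must additionally join the two binding collars, for instance by a product region $\partial\Sigma\times D^2\times I$ pushed through the complementary $D^4\times S^2$ (rotating the hemisphere $D^2_+$ to $D^2_-$ in the $S^2$ factor), which leaves the diffeomorphism type of $X$ unchanged. Without the correct decomposition input and without this binding matching, the argument does not go through as written.
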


\noindent Note that Corollary \ref{corollary} implies that every closed orientable $4$-manifold embeds in $\mathbb{R}^7$. However, this can be proved in a more straightforward way by showing relative LF embedding in $D^5 \times D^2.$ In a recent work by Ghanwat-Pancholi-Saha \cite{GNS}, it was shown that every $4$--manifold (possibly non-orientable) without handles of index $3$ and $4$ admits a relative $LF$ embedding in $D^5 \times D^2.$

\

The main idea of proof of Theorem \ref{main theorem} is essentially the same as that of Theorem \ref{GP main theorem} in Ghanwat--Pancholi \cite{GP}. Theorem \ref{main theorem} should be thought of as a relative version of Theorem \ref{GP main theorem}.   

\

We also discuss relative LF embeddings in $D^6$. Let $\Sigma_{g,1}$ denote a genus-$g$ surface with one boundary component. Then $\mathcal{MCG}(\Sigma_{g,1},\partial \Sigma_{g,1})$ is generated by Dehn twists along the set of curves, $\{a_1,c_1,a_2,c_2,...a_{g-1},c_{g-1},a_g,b_1,b_2\}$, as shown in Figure \ref{intropic}. These curves are known as the \emph{Humphreys generators}. The group generated by all the Humphreys generators, except $b_2$, is called the \emph{hyperelliptic subgroup} of $\mathcal{MCG}(\Sigma_{g,1}, \partial \Sigma_{g,1})$. Any element of this group is called hyperelliptic.

\begin{figure}[htbp] 
	\centering
	\includegraphics[height=3cm,width=12cm]{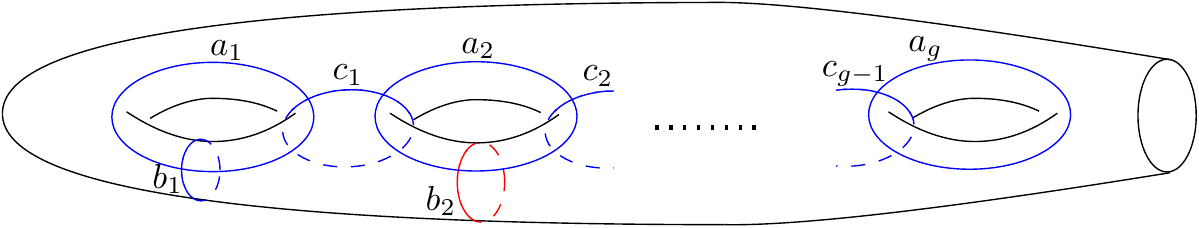}
	\caption{Humphreys generators of mapping class groups of $\Sigma_{g,1}$}
	\label{intropic}
\end{figure}

	\begin{theorem} \label{theorem 2} Let $V^4 = LF(\Sigma_{g,1},\phi)$.

		\begin{enumerate}
			
			\item If $V^4$ admits a proper embedding in $D^6$, then $V^4$ is spin.
			
			\item If $\phi$ is hyperelliptic, then $V^4$ admits a relative LF embedding in $D^6.$
			
		\end{enumerate}
	\end{theorem}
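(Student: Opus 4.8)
The two statements are essentially independent: part (1) is an obstruction coming from the normal bundle of a codimension-two embedding, while part (2) is a construction that exploits the fact that a hyperelliptic fibration is a double branched cover. I would treat them separately.

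For part (1), the plan is to compare the characteristic classes of $V^4$ with those of its normal bundle. A proper embedding $V^4 \hookrightarrow D^6$ has an oriented rank-two normal bundle $\nu$ (oriented since $V$ and $TD^6|_V$ are), and since $TD^6$ is trivial the Whitney sum formula applied to $TV \oplus \nu \cong \epsilon^6$ gives $w_1(\nu)=0$ and $w_2(V)=w_2(\nu)$. As $\nu$ is oriented of rank two, $w_2(\nu)$ is the mod-two reduction of $e(\nu)\in H^2(V;\mathbb{Z})$, so it suffices to prove $e(\nu)=0$. Here I would use that $V$ is null-homologous relative to its boundary: $H_4(D^6,\partial D^6)=0$, so $[V,\partial V]=0$ and $V$ bounds a relative $5$-chain $W$ in $D^6$. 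Linking number with $W$ defines a map $D^6\setminus V \to S^1$ sending a meridian of $V$ to the generator; restricting it to the boundary $S(\nu)$ of a tubular neighborhood produces a class in $H^1(S(\nu);\mathbb{Z})$ that is degree one on each fiber circle. Feeding this into the Gysin sequence of $S(\nu)\to V$, namely $H^1(S(\nu))\to H^0(V)\xrightarrow{\cup e} H^2(V)$, forces $e(\nu)=0$, hence $\nu$ is trivial and $w_2(V)=0$, i.e. $V$ is spin. I expect this part to be routine once the linking/Gysin mechanism is set up; the only care needed is the relative bookkeeping at $\partial V\subset S^5$, which one can also sidestep by doubling to a closed embedding $DV\hookrightarrow S^6$.

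For part (2) the key structural input is that a hyperelliptic monodromy realizes $V$ as a double branched cover. Writing $\Sigma_{g,1}$ as the double cover of a disk $\Delta$ branched over $2g+1$ points, the hyperelliptic involution $\iota$ is the covering involution; by the Birman--Hilden correspondence every hyperelliptic mapping class commutes with $\iota$ and descends to the base, so a hyperelliptic $\phi$ descends to a braid $\beta$ on $2g+1$ strands. Consequently $V=LF(\Sigma_{g,1},\phi)$ is the double cover of $\Delta\times D^2=D^4$ branched over the braided surface $B\subset D^4$ traced out by the $2g+1$ branch points as they move over the Lefschetz base $D^2$ according to $\beta$, the Lefschetz critical points corresponding exactly to the crossings of $\beta$ where two branch points collide (locally $w^2=\zeta^2-u$, a smooth total space fibering with a node). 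The plan is then to embed this branched cover by the square-root construction: since $B$ is braided over $D^2$ it is the transverse zero set of the single function $h(\zeta,u)=\prod_{i}(\zeta-\beta_i(u))$ on $\Delta\times D^2$, so $V\cong\{w^2=h(\zeta,u)\}$ sits inside $\mathbb{C}_w\times\Delta\times D^2$; capping $\mathbb{C}_w$ into a disk (legitimate as $h$ is bounded) identifies the ambient with $D^4\times D^2=D^6$ and gives a proper embedding.

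Finally I would verify that this is a relative LF embedding in the sense of Ghanwat--Pancholi. The target $D^6=D^2\times D^4$ carries the trivial product Lefschetz fibration over the base $D^2$ with fiber $D^4=\mathbb{C}_w\times\Delta$, and the embedding is fiber-preserving over this base: over each $u$ it restricts to the standard embedding of the branched-cover fiber $\Sigma_{g,1}=\{w^2=h(\cdot,u)\}\hookrightarrow D^4$, and over $\partial D^2$ it realizes the open book $OB(\Sigma_{g,1},\phi)$ inside the boundary. The step I expect to be the main obstacle is precisely this compatibility check in the achiral setting: the model $w^2=h$ must accommodate negative (achiral) vanishing cycles, i.e. both signs of braid crossings, while still producing a smooth proper embedding matching the prescribed fibration and boundary open book. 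The point that makes this go through is that $h$ need only be a smooth function cutting out $B$, not a holomorphic one, so a braided surface with crossings of either sign is still cut out by the root polynomial and the smooth branched cover embeds regardless of crossing sign; what remains is to confirm that the induced fiberwise and boundary structures agree with $LF(\Sigma_{g,1},\phi)$ and its open book, and to record that the resulting total space is spin (being a branched double cover of the spin manifold $D^4$ over a null-homologous surface), consistent with part (1).
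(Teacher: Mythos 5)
Your proposal is correct, and part (2) takes a genuinely different route from the paper. For part (1) you and the paper use the same Whitney-sum argument, but the paper simply \emph{asserts} that the normal bundle of a proper embedding $V^4\hookrightarrow D^6$ is trivial; since $V$ is a $2$--handlebody, $H^2(V;\mathbb{Z})$ is generally nonzero and this needs justification, which your Seifert-membrane/Gysin argument (or the doubling trick, using that a closed orientable $4$--manifold in $S^6$ has a Seifert hypersurface and hence trivial normal bundle) supplies --- so here you are more careful than the source. For part (2) the paper stays inside its own machinery: it embeds $\Sigma_{g,1}$ in $S^3\times\{\tfrac32\}\subset D^4$ as a linear chain of $2g$ plumbed Hopf bands, observes that every chain curve (the Humphreys generators minus $b_2$) then has a Hopf annulus neighborhood, deduces that this proper embedding is flexible and in standard position with respect to hyperelliptic monodromies, and reruns the proof of its Lemma 3.1 verbatim with $W=D^4$. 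You instead invoke Birman--Hilden to present $V$ as the double cover of $D^4=\Delta\times D^2$ branched over a banded/braided surface $B$, and embed it by the square-root construction $\{w^2=h(\zeta,u)\}\subset \mathbb{C}_w\times\Delta\times D^2\cong D^6$; your key observation that $h$ need only be smooth, so that negative bands (local model $\zeta^2-\bar u$, giving anti-holomorphic nodes upstairs) are handled exactly like positive ones, is precisely what makes the achiral case go through. The paper's route buys uniformity --- the same flexibility lemma drives Theorems 1.1 and 1.5, and the argument works for any fiber embedding in which the monodromy curves acquire Hopf annulus neighborhoods --- while your route buys an explicit, automatically fiber-preserving polynomial model with $\pi_2\circ f=\pi$ on the nose and no appeal to ambient isotopies. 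Three small points to tidy up: (i) your construction, like the paper's, uses the given \emph{factorization} of $\phi$ into twists along the chain curves (one band per factor), which is exactly the paper's Remark 1.4 on presentations in Humphreys generators --- Birman--Hilden alone ("$\phi$ descends to a braid") is not quite the right statement; (ii) Lefschetz critical points correspond to the \emph{bands} (simple branch points of $B\to D^2$, where two sheets merge), not to braid crossings, though your local model $w^2=\zeta^2-u$ is the correct one; (iii) your closing parenthetical --- that a double branched cover of a spin manifold over a null-homologous surface is spin --- is not true in that generality (a divisibility/framing condition on the branch class is needed), but it is harmless here since spin-ness of $V$ follows from part (1) once the embedding exists.
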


We note that Theorem \ref{theorem 2} also induces an embedding of the boundary $\partial V$ in $\partial D^6 = S^5$ preserving the open book structures coming from the respective achiral Lefschetz fibrations.

\begin{remark}
	An achiral Lefschetz fibration depends on the presentation of the monodromy of its fiber. Therefore, in statement $(2)$ of Theorem \ref{theorem 2}, the monodromy is assumed to be presented in terms of the Humphrey's generators. 
\end{remark}

\subsection{Acknowledgment} The first author is supported by the CSIR, India (Fellowship Ref. no. 09/084(0688)/2016-EMR-I).
	
	\section{Preliminaries}

	\subsection{Open book decomposition} An open book is a decomposition of a manifold into a co-dimension $2$ submanifold and a fibration over $S^1$.

	\begin{definition}\label{def:ab_open_book}
		Let $\Sigma$ be a surface with boundary and let $\phi$ be a diffeomorphism of $\Sigma$ that is identity on a collar neighborhood of $\partial \Sigma$. An open book decomposition of $M$ is a pair $(\Sigma, \phi)$ such that $M$ is diffeomorphic to $\mathcal{MT}(\Sigma, \phi) \cup_{id} \partial \Sigma \times D^2$, where $id$ denotes the identity map of $\partial \Sigma \times S^1.$
		
		Here, $\mathcal{MT}(\Sigma, \phi)$ is the mapping torus of $\phi$. We denote such an open book by $\mathcal{O}b(\Sigma, \phi)$.
	\end{definition}

	 For more details on open books, we refer to \cite{Et}.

	\subsection{Lefschetz fibration and achiral Lefschetz fibration}\label{LF def} The following descriptions of a Lefschetz fibration and an achiral Lefschetz fibration is taken from \cite{EL}.
	
	 A \emph{Lefschetz fibration}(LF) of an oriented $4$-manifold $X^4$ is a map $\pi : X^4 \to D^2$, where $D^2$ is a $2$-disk and $D\pi$ is surjective at all but finitely many points $p_1,\ldots, p_k$, called \emph{singular points}. Each point $p_i$ has a neighborhood $U_i$ that is orientation preserving diffeomorphic to an open set $U_i' \subset \C^2$, and in these local coordinates $\pi$ is given by the map $(z_1,z_2) \mapsto z_1 \cdot z_2$. We say $\pi : X^4 \to D^2$ is an \emph{achiral Lefschetz fibration}(ALF) if there is a map $\pi : X^4 \to D^2$ as above, except the local charts expressing $\pi$ as $(z_1,z_2) \mapsto z_1 \cdot z_2$, need not be orientation preserving. Following are some well known facts about (achiral) Lefschetz fibrations.
	
	\begin{enumerate}
		\item Let $F = D^2 \setminus \pi(\{p_1,\ldots, p_k\})$ and $X' = \pi^{-1}(F)$.  Then $\pi|_{X'} : X'\to F$ is a fibration with fiber some surface $\Sigma$, possibly with boundary. 
		
		\item Fix a point $x\in F$ and for each $i=1,\ldots, k$, let $\gamma_i$ be a path in $D^2$ from $x$ to $\pi(p_i)$ whose interior is in $F$. Then there is an embedded simple closed curve $v_i$ in $F_x=\pi^{-1}(x)$ that is homologically non-trivial in $F_x$ but is trivial in the homology of $\pi^{-1}(\gamma_i)$. The curve $v_i$ is called the \emph{vanishing cycle of $p_i$}. We will assume that $\gamma_i\cap \gamma_j=\{x\}$ for all $i\not=j$. 
		
		\item For each $i$, let $D_i$ be a disk inside $D^2$ containing $\pi(p_i)$ in its interior, disjoint from the $\gamma_j$ for $j\not=i$, and intersecting $\gamma_i$ in a single arc that is transverse to $\partial D_i$. The boundary $\partial \pi^{-1}(D_i)$ is a $\Sigma$-bundle over $S^1$. Identifying a fiber of $\pi^{-1}(\partial D_i)$ with $\Sigma$ using $\gamma_i$, the monodromy of $\pi^{-1}(\partial D_i)$ is given by a positive Dehn twist along $v_i$. 
		
		\item Let $D$ be a disk containing $x$ and intersecting each $\gamma_i$ in an arc transverse to $\partial D$. The manifold $X$ can be built from $D^2\times \Sigma=\pi^{-1}(D)$ by adding a $2$--handle for each $p_i$ along $v_i$ sitting in $F_{q_i}=\pi^{-1}(q_i)$ where $q_i=\partial D\cap \gamma_i$ with framing one less than the framing of $v_i$ given by $F_{q_i}$ in $\partial D^2\times \Sigma$. Conversely, any 4-manifold constructed from $D^2\times \Sigma$ by attaching $2$--handles in this way will correspond to a Lefschetz fibration.
		
		\item If $\pi: X\to D^2$ is an achiral Lefschetz fibration (ALF), then the above statements are still true but for an \emph{achiral singular point} the monodromy is a negative Dehn twist and the $2$--handle is added with framing one greater than the surface framing. 
		
		\item An ALF determines a factorization of the fiber monodromy. 
		
		\item If $\Sigma$ is a surface with boundary, then the ALF induces an open book decomposition on the boundary manifold with fiber $\Sigma$ and mnodromy same as the monodromy of the ALF.
	\end{enumerate}

 We shall denote an achiral Lefschetz fibration over $D^2$, with fiber $\Sigma$ and a monodromy $\phi$, by $LF(\Sigma,\phi)$.

    \subsection{A property of $\mathcal{MCG}(\Sigma,\partial \Sigma)$} Let $(\Sigma,\partial \Sigma)$ be an orientable bounded surface with $g$ handles and $m$ boundary components as shown in Figure \ref{pic0} below.

    \begin{figure}[htbp]
    	\centering
    	\def\svgwidth{7cm}
\begingroup%
  \makeatletter%
  \providecommand\color[2][]{%
    \errmessage{(Inkscape) Color is used for the text in Inkscape, but the package 'color.sty' is not loaded}%
    \renewcommand\color[2][]{}%
  }%
  \providecommand\transparent[1]{%
    \errmessage{(Inkscape) Transparency is used (non-zero) for the text in Inkscape, but the package 'transparent.sty' is not loaded}%
    \renewcommand\transparent[1]{}%
  }%
  \providecommand\rotatebox[2]{#2}%
  \newcommand*\fsize{\dimexpr\f@size pt\relax}%
  \newcommand*\lineheight[1]{\fontsize{\fsize}{#1\fsize}\selectfont}%
  \ifx\svgwidth\undefined%
    \setlength{\unitlength}{438.11565511bp}%
    \ifx\svgscale\undefined%
      \relax%
    \else%
      \setlength{\unitlength}{\unitlength * \real{\svgscale}}%
    \fi%
  \else%
    \setlength{\unitlength}{\svgwidth}%
  \fi%
  \global\let\svgwidth\undefined%
  \global\let\svgscale\undefined%
  \makeatother%
  \begin{picture}(1,0.89846577)%
    \lineheight{1}%
    \setlength\tabcolsep{0pt}%
    \put(0,0){\includegraphics[width=\unitlength,page=1]{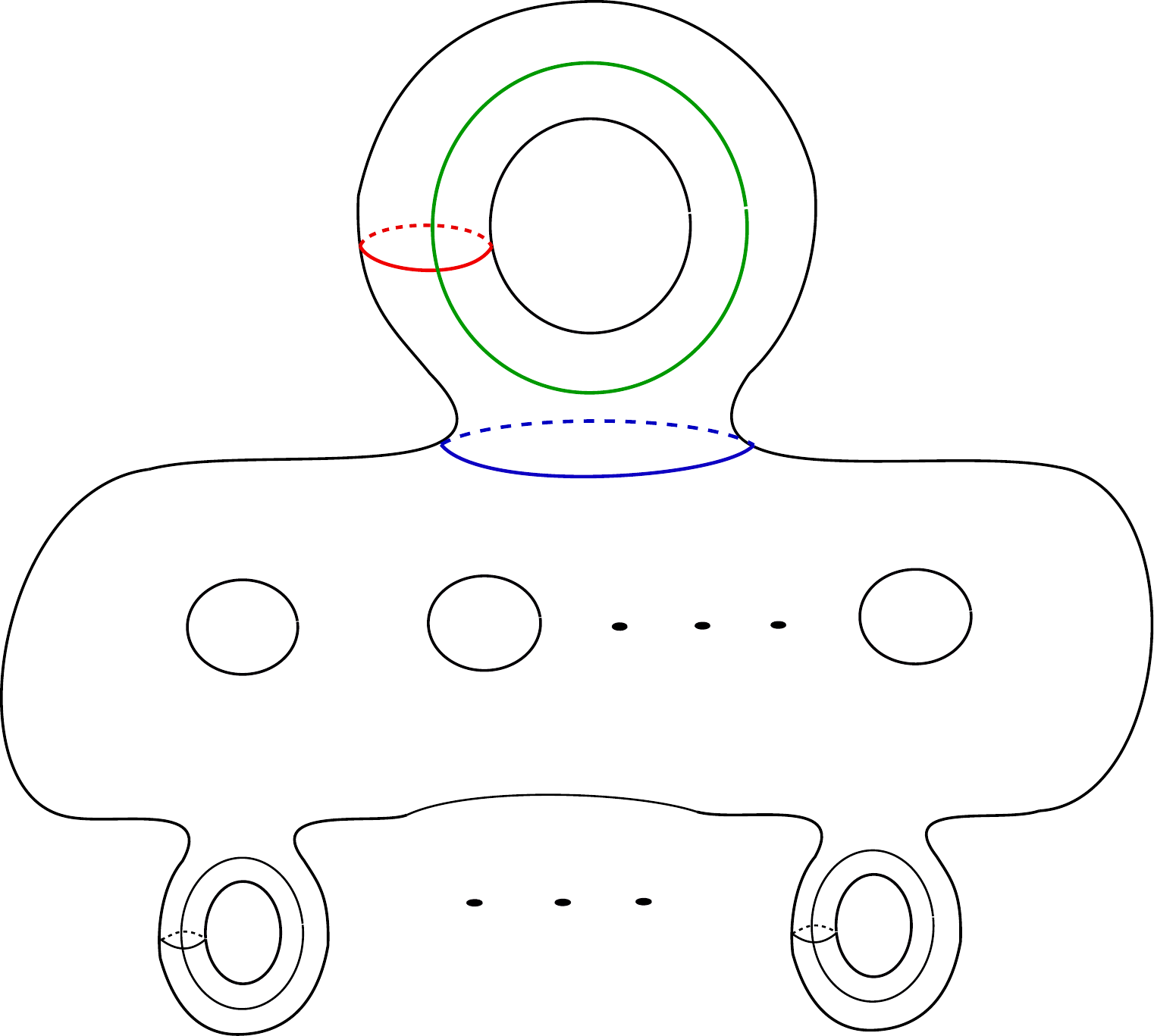}}%
    \put(0.33208221,0.63333242){\color[rgb]{0,0,0}\makebox(0,0)[lt]{\lineheight{1.25}\smash{\begin{tabular}[t]{l}$\alpha_1$\end{tabular}}}}%
    \put(0.18654281,0.26710639){\color[rgb]{0,0,0}\makebox(0,0)[lt]{\lineheight{1.25}\smash{\begin{tabular}[t]{l}$\partial_1$\end{tabular}}}}%
    \put(0.48321336,0.8555666){\color[rgb]{0,0,0}\makebox(0,0)[lt]{\lineheight{1.25}\smash{\begin{tabular}[t]{l}$\beta_1$\end{tabular}}}}%
    \put(0.48053637,0.45117759){\color[rgb]{0,0,0}\makebox(0,0)[lt]{\lineheight{1.25}\smash{\begin{tabular}[t]{l}$\gamma_1$\end{tabular}}}}%
    \put(0.39235674,0.26777468){\color[rgb]{0,0,0}\makebox(0,0)[lt]{\lineheight{1.25}\smash{\begin{tabular}[t]{l}$\partial_2$\end{tabular}}}}%
    \put(0.76388029,0.2797535){\color[rgb]{0,0,0}\makebox(0,0)[lt]{\lineheight{1.25}\smash{\begin{tabular}[t]{l}$\partial_m$\end{tabular}}}}%
  \end{picture}%
\endgroup%

    	\caption{Genus $g$ surface.}
    	\label{pic0}
    \end{figure}

    Let $\{\alpha_1,\beta_1,\cdots,\alpha_g, \beta_g\}$ be the simple closed curves along the $g$ handles. If a simple closed curve $\gamma$ intersects $\alpha_i$ for some $i \in \{1,2,\cdots, g\}$, then we say that $\gamma$ meets the $i^{th}$ handle. Let $\tilde{\Sigma}$ be the closed surface obtained from $\Sigma$ by filling the boundary circles with disks. Let $C$ be any simple closed curve on $\tilde{\Sigma}$. Now, according to Lemma $3$ in \cite{Li}, given any such curve $C$, there exists a diffeomorphism of $\tilde{\Sigma}$ which sends $C$ to a curve which  does not meet any handle. The proof of Lemma $3$ \cite{Li} also holds for surfaces with boundary. This is because the required diffeomorphism in Lemma $3$ of \cite{Li} is obtained by applying appropriate Dehn twists along finitely many curves and therefore, the diffeomorphism is supported away from the $m$ disks bounding the $\partial_i$s in $\tilde{\Sigma}$.
    
    \begin{proposition}[Lickorish \cite{Li}] \label{lickorish} 
    	
    	Let $C$ be any simple closed curve on $(\Sigma,\partial \Sigma)$. There exists
    	a diffeomorphism $\phi \in \mathcal{MCG}(\Sigma, \partial \Sigma)$ such that $\phi(C)$ does not meet any handle of $\Sigma.$
    
    \end{proposition}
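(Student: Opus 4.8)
The plan is to reduce the statement to Lickorish's result on closed surfaces by capping off the boundary, and then lift the resulting diffeomorphism back to the bounded surface. First I would cap off each of the $m$ boundary circles $\partial_i$ of $\Sigma$ with a $2$-disk $\Delta_i$ to obtain the closed orientable genus-$g$ surface $\tilde{\Sigma} = \Sigma \cup \bigcup_{i=1}^m \Delta_i$. The curves $\{\alpha_1, \beta_1, \ldots, \alpha_g, \beta_g\}$ along the handles, together with the given simple closed curve $C \subset \Sigma$, all sit naturally inside $\tilde{\Sigma}$, and the notion of \emph{meeting a handle} (intersecting some $\alpha_i$) is unaffected by the capping.

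Next I would invoke Lemma $3$ of \cite{Li} applied to the closed surface $\tilde{\Sigma}$: there is a diffeomorphism $\tilde{\phi}$ taking $C$ to a curve $\tilde{\phi}(C)$ disjoint from every $\alpha_i$, that is, not meeting any handle. The essential structural input from Lickorish's proof is that $\tilde{\phi}$ is realized as a finite composition of Dehn twists $\tilde{\phi} = T_{\delta_k} \circ \cdots \circ T_{\delta_1}$ along explicitly described simple closed curves $\delta_1, \ldots, \delta_k$ on $\tilde{\Sigma}$.

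The key step is to argue that this composition descends to an element of $\mathcal{MCG}(\Sigma, \partial \Sigma)$. I would observe that each twisting curve $\delta_j$ can be isotoped off the capping disks $\Delta_i$, so that the support of each $T_{\delta_j}$ lies in the interior of $\Sigma$, away from a collar of $\partial \Sigma$. Consequently the composition $\phi := T_{\delta_k} \circ \cdots \circ T_{\delta_1}$ restricts to a diffeomorphism of $\Sigma$ that is the identity on a collar of $\partial \Sigma$, hence represents a class in $\mathcal{MCG}(\Sigma, \partial \Sigma)$. Since the $\delta_j$ avoid the disks and $C \subset \Sigma$, the restricted map $\phi$ agrees with $\tilde{\phi}$ on $C$, so $\phi(C) = \tilde{\phi}(C)$ still misses every handle.

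The main obstacle is the verification underlying this descent: one must check that Lickorish's construction can genuinely be carried out using only twisting curves that are disjoint from, or isotopic off, the capping disks. This is precisely the remark recorded before the statement --- the diffeomorphism in Lemma $3$ of \cite{Li} is built from Dehn twists along finitely many curves, and these curves may be chosen in the complement of the $m$ filling disks bounding the $\partial_i$. Once this is granted, the boundary-fixing property is automatic, since the twists are then supported in the interior of $\Sigma$, and the proposition follows. I would therefore spend the bulk of the argument confirming that the curves appearing in Lickorish's inductive reduction never need to cross the capping regions, which is plausible because those regions are disjoint from the handle curves $\alpha_i, \beta_i$ that govern the reduction.
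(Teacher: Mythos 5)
Your proposal is correct and takes essentially the same route as the paper: the paper likewise caps off the $m$ boundary circles to form the closed surface $\tilde{\Sigma}$, applies Lemma $3$ of Lickorish \cite{Li} there, and observes that the resulting diffeomorphism is a composition of Dehn twists along finitely many curves supported away from the capping disks, hence restricts to an element of $\mathcal{MCG}(\Sigma,\partial\Sigma)$ carrying $C$ off the handles. The one point you flag as needing verification --- that Lickorish's twisting curves can be kept disjoint from the filling disks --- is precisely the assertion the paper also makes without further elaboration, so your argument matches the paper's in both structure and level of detail.
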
 
    
    \subsection{Flexible embedding in standard position}\label{standard flex}
    
    We now recall the notion of \emph{flexible embedding} and embedding in a \emph{standard position} from \cite{GP}. We shall state the definitions adapted to the relative case which will be useful for our purpose.
    
    \begin{definition}[Flexible embedding]
    	Let $W^4$ be an orientable bounded smooth manifold and let $(\Sigma,\partial \Sigma)$ be a bounded orientable
    	surface. A smooth proper embedding $h:(\Sigma,\partial \Sigma) \hookrightarrow (W, \partial W)$ is said to be flexible if for all $\phi \in \mathcal{M}CG(\Sigma_g)$ there exists a relative diffeomorphism $\psi$ of $(W,\partial W)$ isotopic to the identity which maps $(\Sigma,\partial \Sigma)$ to itself and satisfies $\psi \circ h = h \circ \phi.$
    \end{definition}

    \begin{definition} [Embedding in standard position]
    	A proper embedding $\phi: (\Sigma,\partial \Sigma) \hookrightarrow (W,\partial W)$ is said to be in a standard position if the following properties hold:
    	
    	\begin{enumerate}
    		\item Every simple closed curve $\gamma$ on $\phi(\Sigma)$ is a boundary of a $2$--disk 
    		$ D^2$  intersecting $\phi(\Sigma)$ only in $\gamma.$

    		\item There exists a tubular neighborhood $N(D^2)$ of the disk $D^2$ having the boundary $\gamma$ such that 
    		$N(D^2)$ is the image of a coordinate chart $\phi_{\gamma}: \C^2 \rightarrow N(D^2)$ satisfying the following:
    		
    		$\phi_{\gamma}^{-1} (\phi(\Sigma) \cap N(D^2))$ is  $g^{-1}(1),$ where $g: \C^2 \rightarrow \mathbb{C}$ is the polynomial  map $g(z_1, z_2) = z_1.z_2.$ Topologically this means that $\phi_{\gamma}^{-1} (\phi(\Sigma) \cap N(D^2))$ is a Hopf annulus in a $3$-sphere around the the origin in $\mathbb{C}^2$.
    	\end{enumerate}
    	\label{def:standard position}
    \end{definition}

Next we recall the notion of a \emph{separable Hopf link}.
    
    \begin{definition}[Separable Hopf link] \label{def:separable_Hopf_link}We say that a link $l_1\sqcup l_2$ in  a $4$--manifold $W$ is a separable Hopf link provided following properties are satisfied:
    	\begin{enumerate}
    		\item There exist an embedding of a $4$--ball $D^4 = D^2\times D^2$ in $W$ such that $\partial D^2 \times \lbrace 0 \rbrace \sqcup\lbrace 0 \rbrace \times \partial D^2 = l_1\sqcup l_2$.
    		\item There exists two disjoint properly embedded discs $D_1$ and $D_2$ in $W \setminus (D^2 \times D^2)^\circ$ such that $\partial D_1 = l_1$ and $\partial D_2 = l_2$.
    	\end{enumerate}
    \end{definition}

The following Lemma was proved by Ghanwat--Pancholi (Lemma $4.4$,\cite{GP}).
    
    \begin{lemma}[\cite{GP}]\label{GP1}
    	Let $N$ be a $4$--manifold which admits a separable Hopf link. Then there exists an embedding $\phi$ of any closed orientable surface $\Sigma_g$
    	of genus $g$ in $N$ which satisfies the following:
    	
    	\begin{enumerate}
    		\item The embedding is flexible.
    		\item The embedding is in  a standard position.  
    	\end{enumerate}
    \end{lemma}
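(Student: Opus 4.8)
The plan is to build the embedding explicitly out of Hopf bands and then to read off both properties from the local structure of the standard chart $g(z_1,z_2)=z_1 z_2$. The separable Hopf link of Definition \ref{def:separable_Hopf_link} already hands us a genus-$0$ model: inside the ball $D^4 = D^2\times D^2$ the annulus $A = g^{-1}(1) = \{z_1 z_2 = 1\}$ has boundary the Hopf link $l_1\sqcup l_2$, and capping its two boundary circles with the disjoint disks $D_1,D_2$ supplied by separability produces an embedded sphere $\phi(\Sigma_0)\subset N$. For general $g$ I would first realize $\Sigma_g$ with two open disks removed as a genus-$g$ Seifert surface of the Hopf link, obtained from $A$ by plumbing on $2g$ further Hopf bands inside $D^4$ (equivalently, $g$ trivial stabilizations, each adding a handle while keeping the boundary equal to $l_1\sqcup l_2$), and then cap off with $D_1$ and $D_2$. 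Because the plumbed surface lies in $D^4$ while $D_1,D_2\subset N\setminus (D^4)^\circ$ meet it only along $l_1\sqcup l_2\subset\partial D^4$, this closes up to a genuine embedding $\phi\colon\Sigma_g\hookrightarrow N$.

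The second step is to check that the core curves of the plumbed Hopf bands are in standard position. For a single band the core is the circle $\{(e^{i\theta},e^{-i\theta})\}$, which bounds the disk $\{(z,\bar z):|z|\le 1\}$ meeting $A$ only along that circle, and the identity chart exhibits precisely the model $g^{-1}(1)$ demanded in Definition \ref{def:standard position}. Since each band in the plumbing carries its own such chart, the whole system of core curves is in standard position; I would arrange the plumbing pattern so that this system contains a full Lickorish/Humphries generating set $\{\alpha_i,\beta_i\}$ of $\mathcal{MCG}(\Sigma_g)$.

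Step three establishes flexibility. The map $g(z_1,z_2)=z_1 z_2$ is a Lefschetz fibration of $\C^2$ over $\C$ having the core curve as vanishing cycle, so parallel transport of the fibre around a small loop encircling $0$ induces a Dehn twist along that core. This monodromy is supported in the chart and extends by the identity to an ambient diffeomorphism $\psi$ of $N$ that is isotopic to the identity, preserves $\phi(\Sigma_g)$, and satisfies $\psi\circ\phi = \phi\circ f$ for the corresponding Dehn twist $f$. As such twists compose ($\psi_1\circ\psi_2\circ\phi = \phi\circ f_1\circ f_2$) and the chosen cores generate $\mathcal{MCG}(\Sigma_g)$, every mapping class is realized by an ambient diffeomorphism isotopic to the identity, which is exactly the required flexibility.

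Finally I would upgrade standard position from the generating curves to all simple closed curves using Proposition \ref{lickorish} together with the flexibility just proved. Given an arbitrary simple closed curve $\gamma=\phi(\gamma')$, Lickorish's theorem provides $f_0\in\mathcal{MCG}(\Sigma_g)$ carrying $\gamma'$ to a standard curve $c'$ that does not meet any handle and whose image is in standard position; flexibility promotes $f_0$ to an ambient $\psi\simeq\mathrm{id}$ with $\psi(\gamma)=\phi(c')$, and then $\psi^{-1}$ transports the standard chart around $\phi(c')$ to a standard chart around $\gamma$. I expect the main obstacle to be the construction in Step~1: one must plumb the Hopf bands disjointly inside the single ball $D^4$ so that the two ends of the resulting Seifert surface are exactly $l_1$ and $l_2$, verify the capping condition, and ensure the core system really contains a generating set while the Lickorish target curves land in controlled standard-position curves (the genus-$0$ and separating cases needing explicit attention). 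It is precisely here that both defining properties of a separable Hopf link are used, and where care with framings and the plumbing pattern is essential.
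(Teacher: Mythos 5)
Your Steps 2--4 are sound in spirit (the local chart argument for the twist along a Hopf-band core, and the upgrade to all curves via Proposition~\ref{lickorish} plus flexibility, are essentially what the paper and \cite{GP,PPS} do), but Step 1 contains a genuine error that the rest of the argument depends on. Plumbing is a Murasugi sum: gluing a Hopf band to the annulus $A=g^{-1}(1)$ along a square \emph{changes the boundary link}. Already one plumbing turns the Hopf link into a trefoil-type fibered link, and a chain of $2g$ plumbings produces a fibered link that is not $l_1\sqcup l_2$. So your parenthetical ``equivalently, $g$ trivial stabilizations, each adding a handle while keeping the boundary equal to $l_1\sqcup l_2$'' is false: the two operations are not equivalent, and each fails a different requirement. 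Plumbing gives the cores their Hopf-annulus charts but destroys the boundary condition, so the separability disks $D_1,D_2$ (which cap only $l_1$ and $l_2$) can no longer close the surface up; trivial stabilization preserves the boundary, but the new handle curves then have $0$-framed annulus neighborhoods, not Hopf annuli, so property (2) of Definition~\ref{def:standard position} fails for them. There is no way to have both at once in your construction, and this is exactly the tension the paper's proof is designed to resolve: it embeds $\Sigma$ \emph{standardly} (bounding a handlebody) in a slice $S^3\times\{\frac{3}{2}\}$ of the collar of the $4$-ball, removes one disk and attaches a \emph{single} Hopf band whose two new boundary circles are the Hopf link, caps only those with $D_1,D_2$, and then observes that every Lickorish generator can be \emph{isotoped through the attached band} (the region $C,C_H$ in Figure~\ref{pic1}) so as to acquire a Hopf-annulus neighborhood. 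The one auxiliary band is a framing reservoir serving all $3g-1$ Lickorish generators; you do not need one band per generator.

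A second, independent problem: as stated, your chain plumbing makes only the chain of core curves flexible, and for $g\geq 3$ the twists along a chain generate only the hyperelliptic subgroup of $\mathcal{MCG}(\Sigma_g)$, not the whole group, so your Step 3 does not deliver flexibility for all mapping classes. This is not a hypothetical worry --- it is precisely why the paper's Theorem~\ref{theorem 2}(2), which uses the chain-plumbed embedding of Figure~\ref{picl} in $S^3\subset\partial D^4$, only reaches \emph{hyperelliptic} monodromies for embeddings into $D^6$. You gesture at fixing this by plumbing along the Humphries tree (adding the branch realizing $b_2$), and the intersection pattern is indeed a tree so such a plumbing exists in $S^3$; but this makes the boundary link still more complicated and so worsens, rather than repairs, the capping problem from the first paragraph. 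In short: the construction of the embedding itself is the real content of the lemma, your construction of it does not close up, and the paper's ``standard handlebody plus one Hopf band'' device is not an optional variant but the mechanism that simultaneously secures the boundary condition, full flexibility, and standard position.
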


    \subsection{Lefschetz fibration embedding}

       We observe that the following relative version of Lemma \ref{GP1} also holds true. The proof is essentially the same as the proof of Lemma \ref{GP1}. However, for the sake of convenience, we review the arguments adapted to our setting, i.e., the relative case.
    
    \begin{lemma}\label{rel GP1}
    	Let $W$ be a $4$--manifold with connected boundary which admits a separable Hopf link. Then there exists a proper embedding $\phi$ of any bounded orientable surface $(\Sigma, \partial \Sigma)$ in $(W,\partial W)$ which satisfies the following:
    	
    	\begin{enumerate}
    		\item $\phi$ is flexible.
    		\item $\phi$ is in a standard position.  
    	\end{enumerate}
    \end{lemma}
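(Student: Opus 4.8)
The plan is to follow the strategy of Ghanwat--Pancholi for Lemma~\ref{GP1} essentially line for line, while keeping track of the boundary. The separable Hopf link supplies the local model required by Definition~\ref{def:standard position}: inside the box $D^4 = D^2\times D^2\subset W$ the link $l_1\sqcup l_2$ bounds the Hopf annulus $A = g^{-1}(1)$ for $g(z_1,z_2)=z_1z_2$, while the two disks $D_1,D_2$ in $W\setminus (D^2\times D^2)^\circ$ cap $l_1$ and $l_2$. Thus $S = D_1\cup A\cup D_2$ is an embedded sphere which is already in standard position in a neighbourhood of $A$, the chart $\phi_\gamma$ being the inclusion of the box. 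To build a genus $g$ surface I would build up the genus by attaching $g$ handles, each modelled on a disjoint copy of the separable-Hopf-link box, so that every handle carries its own $\C^2$-chart in which its core curve appears as a Hopf annulus $g^{-1}(1)$.

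Second, I would realise the $\mathcal{MCG}(\Sigma,\partial\Sigma)$-action by ambient isotopies. In each chart $\phi_\gamma:\C^2\to N(D^2)$ consider the rotation $R_\theta(z_1,z_2)=(e^{i\theta}z_1,e^{-i\theta}z_2)$, cut off by a bump function of $|z_1z_2|$ so that it is supported in $N(D^2)$. The isotopy $\{R_\theta\}_{\theta\in[0,2\pi]}$ fixes $\partial N(D^2)$, preserves $A=g^{-1}(1)$ setwise, and restricts on $A$ to a full Dehn twist along the core $\gamma$. Hence $R_{2\pi}$ is a relative self-diffeomorphism of $(W,\partial W)$, isotopic to the identity, carrying $\phi(\Sigma)$ to itself and inducing $\tau_\gamma$. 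Since $\mathcal{MCG}(\Sigma,\partial\Sigma)$ is generated by Dehn twists along finitely many curves, composing such rotations yields, for every $\psi\in\mathcal{MCG}(\Sigma,\partial\Sigma)$, an ambient relative diffeomorphism isotopic to the identity realising $\psi$; this is flexibility.

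Third, to upgrade this to standard position for \emph{every} simple closed curve $C$, not merely the generators, I would invoke Proposition~\ref{lickorish}: there is $\psi\in\mathcal{MCG}(\Sigma,\partial\Sigma)$ with $\psi(C)$ disjoint from every handle, so $\psi(C)$ lies in a planar region where a Hopf-chart disk is manifestly available; transporting it back by the ambient diffeomorphism realising $\psi^{-1}$ (which exists by the previous step) produces the required disk and chart for $C$. What makes the relative case identical to the closed one is the observation recorded after Proposition~\ref{lickorish}: the diffeomorphisms supplied by Lickorish are supported away from the $m$ disks capping the boundary circles, hence they extend by the identity near $\partial W$ and are genuinely relative.

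Finally, the only genuinely new step is placing $\partial\Sigma$ on $\partial W$. Having built the closed model in the interior of $W$, I would delete $m$ small disks from it and connect the resulting circles to $\partial W$ by $m$ disjoint embedded annular tubes, chosen to avoid all the Hopf-charts and the disks $D_1,D_2$; connectedness of $\partial W$ lets me route all $m$ tubes to $\partial W$. This turns the embedding into a proper embedding of $(\Sigma,\partial\Sigma)$, and since the flexibility diffeomorphisms are supported away from these tubes, both conclusions survive. The main obstacle I anticipate is the bookkeeping needed to keep the $g$ handle-boxes, the capping disks, and the $m$ boundary tubes pairwise disjoint while arranging that the standard-position chart around each curve remains a clean $g^{-1}(1)$ model; this is exactly the technical heart of Ghanwat--Pancholi's argument, and the relative modifications are confined to the disjointness bookkeeping near $\partial W$.
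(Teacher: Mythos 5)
Your proposal diverges from the paper at the construction stage, and the divergence creates a genuine gap. You build the genus by attaching $g$ handles ``each modelled on a disjoint copy of the separable-Hopf-link box.'' But the hypothesis supplies exactly \emph{one} separable Hopf link, and you cannot replicate it locally: the two components of a Hopf link have linking number $\pm 1$, so they never bound disjoint embedded disks inside a $4$--ball; the capping disks $D_1, D_2$ of Definition~\ref{def:separable_Hopf_link} are global data about $W$ (in $S^2 \times S^2 \setminus D^4$ they come from the two sphere factors), not a local model available near each handle. The paper uses the single box quite differently: identifying $D^4$ with $B^4(0,2)$, it places \emph{all} of $\Sigma$ --- embedded standardly, so that the capped-off surface bounds a handlebody --- in one slice $S^3 \times \{\tfrac{3}{2}\}$ of the collar $S^3 \times [1,2]$, plumbs a \emph{single} Hopf band onto it whose two new boundary circles are $l_1 \times \{\tfrac{3}{2}\} \sqcup l_2 \times \{\tfrac{3}{2}\}$, and closes these off with vertical cylinders and the disks $D_1, D_2$, while $\partial \Sigma$ is carried out by cylinders. (Your tubing of boundary circles out to $\partial W$, with diffeomorphisms supported away from the tubes, is a harmless variant of this last step, and your observation that the Lickorish diffeomorphisms avoid the capping disks matches the paper's remark after Proposition~\ref{lickorish}.)

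The misconstruction then undermines both conclusions. For flexibility, your rotation $R_\theta$ does realize $\tau_\gamma$ ambiently for a curve equipped with a $g^{-1}(1)$ chart --- this is essentially the mechanism behind Lemma 15 of \cite{PPS}, which the paper cites --- though your cutoff should be in the radial coordinate $|(z_1,z_2)|$ rather than in $|z_1 z_2|$, which is identically $1$ on $g^{-1}(1)$ and so would rotate the whole annulus, ends included, instead of twisting it. More seriously, in your construction only the $g$ handle-core curves carry such charts, and twists along $g$ disjoint curves generate merely an abelian subgroup of $\mathcal{MCG}(\Sigma, \partial\Sigma)$; you must also realize twists along the $\beta_i$ and the chain curves joining consecutive handles, and nothing in your setup provides charts for these. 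The paper's own Theorem~\ref{theorem 2} shows this is not a formality: in the plumbing-of-Hopf-bands embedding of Figure~\ref{picl}, the Humphreys generator $b_2$ has no Hopf annulus neighborhood, which is exactly why $D^6$ only accommodates hyperelliptic monodromies. For standard position, your claim that for $\psi(C)$ disjoint from the handles ``a Hopf-chart disk is manifestly available'' fails: such a curve bounds a subsurface of $\phi(\Sigma)$, so its surface framing is $0$ and its annulus neighborhood in the surface is untwisted, whereas the chart of Definition~\ref{def:standard position} forces framing $\pm 1$. In the paper this is precisely what the plumbed Hopf band is for: a handle-free curve is isotoped across the band to adjust its framing, after which it is an unknot in $S^3 \times \{\tfrac{3}{2}\}$ with a Hopf annulus neighborhood, bounding a disk pushed into $S^3 \times [1,\tfrac{3}{2}]$ disjoint from the surface. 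Your proposal uses the separable Hopf link only to seed the initial sphere and then abandons it, which is why both verifications come up short.
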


    \begin{proof}[Proof of Lemma \ref{rel GP1}]
    	
    	Let $l_1\sqcup l_2$ be a separable Hopf link in $W$. So, there exists an embedded $4$--ball $D^4 = D^2\times D^2$ in $W$ such that $\partial D^2\times\lbrace 0 \rbrace \sqcup\lbrace 0 \rbrace \times \partial D^2=l_1\sqcup l_2$, and there exist two disjoint properly embedded discs $D_1$ and $D_2$ in $W\setminus (D^2\times D^2)^\circ$ such that $\partial D_1=l_1$ and $\partial D_2=l_2$. We regard a $4$--ball $D^4$ as the $4$--ball $B^4(0,2)$ of radius $2$ in $\mathbb{C}^2$ with its center at the origin. We will also regard $S^3 \times [1,2]$ as the collar $B^4(0,2) \setminus B^4(0,1)$ contained in $W$.
    	
    	Observe that the link $l_1\times\lbrace \frac{3}{2}\rbrace \sqcup l_2\times\lbrace \frac{3}{2}\rbrace$ bounds a Hopf annulus, say $\mathcal{H}$, in $S^3\times \lbrace \frac{3}{2}\rbrace $.  We embed $\Sigma$ in $S^3\times \{\frac{3}{2}\} \subset S^3 \times [1,2] \subset W$ in the following way. Let $\tilde{\Sigma}_g$ be the closed surface of genus $g$ obtained from $\Sigma$ by killing its $m$ boundary components $\{\partial_1,\cdots,\partial_m\}$, by attaching disks. The standard embedding of $\tilde{\Sigma}$ in $S^3$ is the one that bounds a genus $g$ handlebody. This induces an embedding of $\Sigma$ in $S^3$. We then remove a disk from $\Sigma$ and attach a Hopf band along its boundary. Thus, we get in $S^3\times\lbrace \frac{3}{2}\rbrace$, a surface $\widehat{\Sigma}$ with two extra boundary components as shown in Figure~\ref{pic1}. By adding the disjoint cylinders $(l_1 \sqcup l_2 \sqcup \partial_1 \sqcup \cdots \sqcup \partial_m) \times [\frac{3}{2},2]$ and two disjoint disc $\mathcal{D}_1$,$\mathcal{D}_2$ to $\widehat{\Sigma}$, we obtain a proper embedding of $\Sigma$ in $W$. Let us denote this embedding by $\phi$. We claim that the embedding $\phi: (\Sigma,\partial \Sigma) \hookrightarrow (W,\partial W)$ is both flexible and in standard position.

    	Consider a Dehn twist $\tau_\gamma$ along along a curve $\gamma$ on $\Sigma$ embedded in $W$ via $\phi$. If $\phi(\gamma)$, up to isotopy, has a Hopf annulus neighborhood in $S^3 \times \{ \frac{3}{2}\} \subset W$, then $\tau_\gamma$ can be induced by a diffeomorphism of $S^3$ that is isotopic to the identity map. As shown in the proof of Lemma $15$ in \cite{PPS}, this implies that there exists a diffeomorphism of $(W,\partial W)$, relative isotopic to the identity, which induces $\tau_\gamma$ on the embedded $\Sigma$. Note that every Lickorish generator curve has a Hopf annulus neighborhood under the embedding $\phi$. Therefore, the claim of flexibility follows by successive application of ambient relative isotopies of $(W,\partial W)$ inducing Dehn twists along Lickorish generators. See \cite{PPS} for details.
    	
    	We now show that the embedding is in  a standard position. By construction, any simple closed curve on $\phi(\Sigma)$ can be isotoped on the surface $\phi(\Sigma)$ so that it is contained in $\phi(\Sigma) \cap S^3 \times \{\frac{3}{2}\}.$  We claim that any Lickorish generator on $\phi(\Sigma)$ as well as any curve which does not meet any handle of
    	$\phi(\Sigma)$, satisfy both the properties necessary for an embedding to be in standard position. The reasons are the following.
    	
    	\begin{enumerate}
    		\item  All  curves mentioned in the claim are unknots in $S^3 \times \{\frac{3}{2}\}$. Therefore, they bound a disk in $S^3 \times [1,\frac{3}{2}],$ that meets $\phi(\Sigma)$ only in the given curve.
    		
    		\item Any curve $\gamma$ as in the claim admits a neighborhood $\mathcal{N}(C)$ in  $\phi(\Sigma)$ which is a Hopf band in $S^3 \times \{\frac{3}{2}\}.$
    	\end{enumerate}
    	
    	It follows from both the properties listed above that any  curve $C,$ which
    	is either a Lickorish generator or does not meet any handle of $\Sigma$, satisfies both the properties necessary for a surface to be in standard position.

    	By Proposition \ref{lickorish}, given any curve $C$, there exists a boundary preserving diffeomorphism of $\phi(\Sigma)$ which sends $C$ to a curve which  does not meet any handle. Since the embedding $\phi$ of $(\Sigma, \partial \Sigma)$ is flexible in $(W, \partial W)$, given any curve $C$ that meets some handles can be isotoped so that it does not meet any handle. Hence, the embedding can be assumed to be in a standard position. 
    	
    	\begin{figure}
    		\centering
    		\def\svgwidth{12cm}
    		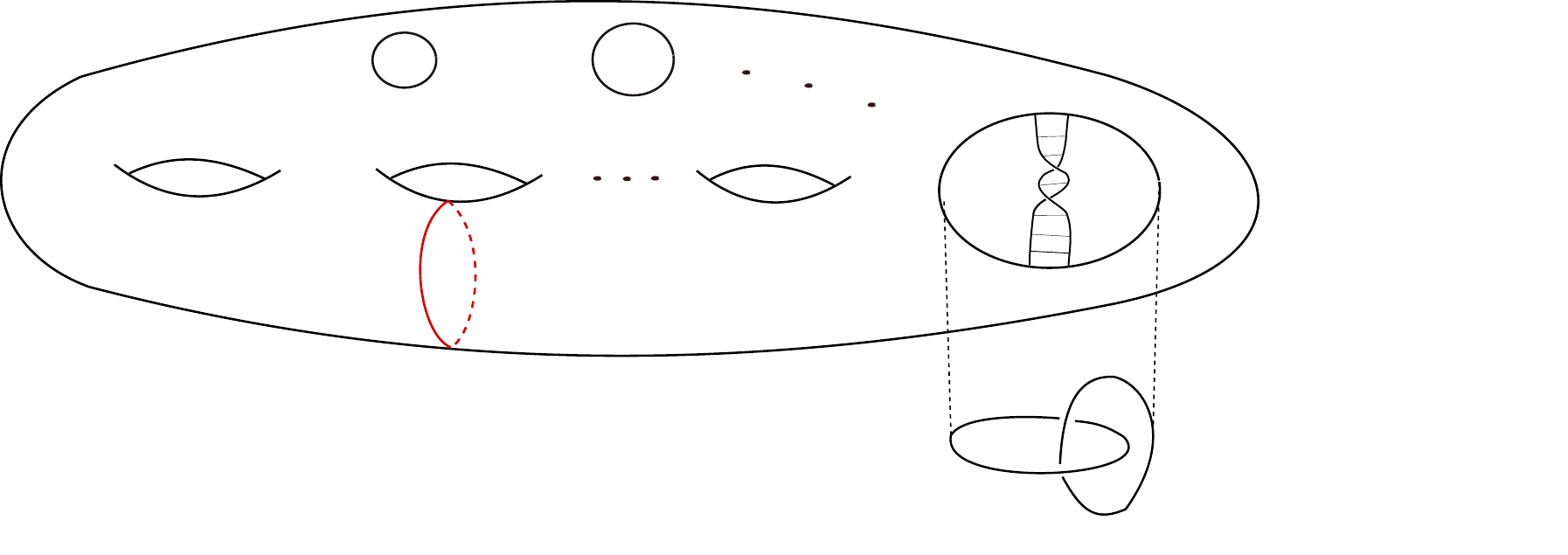
    		\caption{Flexible embedding in standard position.}
    		\label{pic1}
    	\end{figure}
    \end{proof}

    We recall the notion of a \emph{Lefschetz fibration embeddings} from \cite{GP}. The map  $\pi_2 : N \times \C P^1 \rightarrow \C P^1$ 
    corresponds to projection on the second factor.

    \begin{definition} [Lefschetz fibration embedding] \label{LF embedding}
    	Let $(M,\pi:M \rightarrow \Sigma)$  be a Lefschetz fibration, where $\Sigma$ is 
    	$2$--disk or 
    	$\mathbb{C}P^1$.  An embedding $f: M \to N\times\mathbb{C}P^1$ of a manifold $M$ into a manifold $N \times\mathbb{C}P^1$ is said to be a \emph{Lefschetz fibration embedding} provided
    	$\pi_{2}\circ f$=$  i \circ \pi,$ where $i$ is  an inclusion of $\mathbb{D}^2$
    	in $\mathbb{C} P^1$ when $\partial M \neq \emptyset,$ otherwise it is  the identity.
    \end{definition}

\begin{theorem}[Ghanwat--Pancholi, \cite{GP}] 
	\label{GP main theorem}
	Let $M$ be an orientable smooth $4$--manifold. Let $N$ be a $4$--manifold which admits a separable Hopf link. 
	If $\pi:M \rightarrow \Sigma,$
	where $\Sigma$ is either $\mathbb{C}P^1$ or a $2$--disk $\mathbb{D}^2$,
	is a Lefschetz fibration of $M$ having genus $g$ closed surfaces as fibers with $g \geq 1$, then there exists a Lefschetz fibration 
	embedding of $(M, \pi)$ in $(N\times\mathbb{C}P^1, \pi_2).$ 
\end{theorem}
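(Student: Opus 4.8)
The plan is to construct the embedding fiberwise over the base $\Sigma$, using the flexible standard-position embedding of the fiber supplied by Lemma \ref{GP1}, and to splice in an explicit local model over each critical value. Write the critical values as $q_1,\ldots,q_k$ with vanishing cycles $v_1,\ldots,v_k$, and fix small disjoint disks $D_i\ni q_i$ in $\Sigma$. Let $B=\Sigma\setminus\bigcup_i D_i^{\circ}$, so that by fact (3) the restriction $\pi^{-1}(B)\to B$ is an honest $\Sigma_g$-bundle whose monodromy around each $\partial D_i$ is the positive Dehn twist $\tau_{v_i}$.

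First I would fix the flexible embedding $\phi\colon\Sigma_g\hookrightarrow N$ in standard position given by Lemma \ref{GP1}, and embed the restriction over a contractible sub-disk $D_0\subset B$ containing the base point as the product $(b,y)\mapsto(\phi(y),b)\in N\times D_0$, which visibly satisfies $\pi_2\circ f=\pi$. To extend this over all of $\pi^{-1}(B)$ I would invoke flexibility: taking a handle decomposition of $B$ with $D_0$ as the $0$--handle, the clutching function across each $1$--handle is an element of $\mathcal{MCG}(\Sigma_g)$, and by the defining property of a flexible embedding every such mapping class is induced by an ambient isotopy of $N$ isotopic to the identity. Using these ambient isotopies as the clutching maps, the product embedding over $D_0$ extends to a fiber-preserving embedding $f_B\colon\pi^{-1}(B)\hookrightarrow N\times B$ with $\pi_2\circ f_B=\pi$.

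The heart of the argument is the local model over each $D_i$. Near the critical point $p_i$ the fibration is $(z_1,z_2)\mapsto z_1z_2$ on $\mathbb{C}^2$, while standard position (Definition \ref{def:standard position}) provides a chart $\phi_{\gamma}\colon\mathbb{C}^2\to N(D^2)$ with $\phi_{\gamma}^{-1}(\phi(\Sigma_g)\cap N(D^2))=g^{-1}(1)$ for $g(z_1,z_2)=z_1z_2$. I would then define the local embedding
\[
f_i(z_1,z_2)=\bigl(\phi_{\gamma}(z_1,z_2),\,z_1z_2\bigr)\in N(D^2)\times\mathbb{C},
\]
which is a graph over the chart, hence an embedding, and which satisfies $\pi_2\circ f_i=z_1z_2=\pi$ tautologically. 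Because $g^{-1}(1)$ is exactly the Hopf annulus recorded in the standard-position data, the fiber $\{z_1z_2=1\}$ of $f_i$ agrees with the piece of $\phi(\Sigma_g)$ already embedded over the regular part, and the monodromy of the local model around $\partial D_i$ is the positive Dehn twist along $v_i$, matching the monodromy $\tau_{v_i}$ of $\pi^{-1}(B)$.

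The remaining, and main, step is to glue $f_B$ to the $f_i$ along $\pi^{-1}(\partial D_i)$. The substance here is framing bookkeeping: by fact (4) the Lefschetz handle over $p_i$ is attached with framing one less than the surface framing, and I would verify that the local model $z_1z_2$ reproduces exactly this $(-1)$--shift, so that the two embeddings agree up to an isotopy supported in a collar of $\partial D_i$. Concretely, the self-intersection of $g^{-1}(1)$ inside the chart, together with the fact that the flexibility isotopies realizing $\tau_{v_i}$ are isotopic to the identity, should force the boundary identifications to coincide, the standard-position Hopf-annulus neighborhood being the precise mechanism guaranteeing this. I expect this reconciliation of framings and vanishing-cycle identifications across $\partial D_i$ to be the delicate point, since it is where the global (flexible) and local (model) descriptions meet. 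Finally, when $\Sigma=\mathbb{C}P^1$ I would cap off over the complementary disk by the same product construction with $\phi$, taking $i$ to be the identity; when $\Sigma=\mathbb{D}^2$ the inclusion $i\colon\mathbb{D}^2\hookrightarrow\mathbb{C}P^1$ completes the Lefschetz fibration embedding in the sense of Definition \ref{LF embedding}.
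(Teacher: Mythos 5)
Your proposal is correct and takes essentially the same route as the paper's argument (given there for the relative version, Lemma \ref{main lemma}, following \cite{GP}): a flexible, standard-position fiber embedding from Lemma \ref{GP1}, extension over the complement of the critical disks via mapping-torus embeddings induced by ambient isotopies, and a graph-type local model over each critical value. Your local map $f_i(z_1,z_2)=(\phi_{\gamma}(z_1,z_2),\,z_1z_2)$ is precisely the paper's $f_{c_i}\circ i$ restricted to $z_3=0$ (the paper uses $f_{c_i}(z_1,z_2,z_3)=(\iota_{\gamma_i}(z_1,z_2),\phi^{-1}(z_1z_2+z_3))$ so that $\widehat{f}$ and $f_{c_i}\circ i\circ \phi_i$ agree on the overlap), and the boundary matching you flag as the delicate framing point is exactly what the standard-position Hopf-annulus chart is designed to guarantee.
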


We now state the definition of relative Lefschetz fibration embedding.

    \begin{definition}[Relative LF embedding] \label{RLF embedding}
    	Let $(V^4,\pi:V \rightarrow D^2)$  be an ALF. A proper embedding $f_r: (V, \partial V) \to (N \times D^2, \partial (N \times D^2))$ is said to be a \emph{relative Lefschetz fibration embedding}, if
    	$\pi_{2} \circ f_r = \pi.$
    \end{definition}

    \subsection{Spin structures on ALF with fiber closed surfaces}\label{sec spin} A smooth manifold $M$ is called \emph{spin} if its second Stiefel-Whitney class $w_2(M)$ vanishes. Let $\Sigma_g$ denote a closed oriented surface of genus $g$. Suppose that the homology classes of the vanishing cycles of $LF(\Sigma_g,\phi)$ are denoted by $v_1,\cdots, v_t \in H_1(\Sigma_g,;\mathbb{Z}_2)$. Stipsicz gave the following criteria for achiral Lefschetz fibrations with fiber $\Sigma_g$, not to be spin.
    
    \begin{theorem}[Stipsicz] \label{stipsicz}
    	A Lefschetz fibration $\pi : X \to D^2$ with fiber $\Sigma_g$, is not spin if and only if there exist vanishing cycles $v_1,\cdots, v_k$ such that their sum $v = \Sigma^k_{i=1}v_i$ also corresponds to a vanishing cycle, and $k + \Sigma_{1\leq i < j \leq k}v_i \cdot v_j \equiv 0 \pmod 2$.
    \end{theorem}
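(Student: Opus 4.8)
The plan is to detect the spin condition through quadratic refinements of the mod-$2$ intersection form on the fiber, and then to translate the failure of such a refinement into the stated congruence. Recall from fact $(4)$ that $X$ is built from $D^2\times\Sigma_g$ by attaching one $2$--handle along each vanishing cycle $v_i$ with framing $-1$ relative to the fiber framing. Since $D^2\times\Sigma_g$ deformation retracts onto $\Sigma_g$, a spin structure on this piece is the same datum as a spin structure on $\Sigma_g$, which is encoded by a quadratic refinement $q:H_1(\Sigma_g;\Z_2)\to\Z_2$ of the intersection form, i.e. a function with $q(a+b)=q(a)+q(b)+a\cdot b$. The first point I would establish is that a spin structure on $D^2\times\Sigma_g$ extends over $X$ if and only if it extends over each $2$--handle, and that the obstruction to extending across the handle glued along $v_i$ with \emph{odd} framing is precisely the condition $q([v_i])=1$. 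This is the standard dictionary between the attaching framing and the induced spin structure on the attaching circle; the only care needed is to check that the framing parity here (namely $-1$) forces the value $1$ rather than $0$.

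Consequently $X$ is spin if and only if there exists a quadratic refinement $q$ with $q([v_i])=1$ for every vanishing cycle $v_i$. The space of quadratic refinements is an affine space over $H^1(\Sigma_g;\Z_2)=\Hom(H_1(\Sigma_g;\Z_2),\Z_2)$, and for a fixed reference $q_0$ the assignment $q\mapsto q([v_i])$ is affine--linear in the $\Hom$--parameter and depends only on the class $[v_i]$. Thus $\{q([v_i])=1\}_i$ is an affine--linear system over $\Z_2$, and I would invoke the standard solvability criterion: the system is unsolvable exactly when some $\Z_2$--combination of its equations is self-contradictory, which here means there is a subset $T$ of vanishing cycles with $\sum_{i\in T}[v_i]=0$ in $H_1(\Sigma_g;\Z_2)$ along which the induced scalar equation reads $0=1$.

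Writing such a dependent subset as $T=\{v_1,\dots,v_k,v\}$ and singling out $v$, the dependency $\sum_{i\in T}[v_i]=0$ becomes $v=\sum_{i=1}^k v_i$ with $v$ again a vanishing cycle. Iterating the quadratic identity yields
\[
q(v)=\sum_{i=1}^k q(v_i)+\sum_{1\le i<j\le k} v_i\cdot v_j ,
\]
where the cross-terms involving $v$ cancel because $\sum_i v_i\cdot v\equiv 0\pmod 2$. If a valid $q$ existed we would need $q(v_i)=q(v)=1$, which forces $1\equiv k+\sum_{1\le i<j\le k}v_i\cdot v_j\pmod 2$. Hence the system is contradictory along this relation precisely when $k+\sum_{1\le i<j\le k}v_i\cdot v_j\equiv 0\pmod 2$, and conversely any configuration satisfying this parity produces an unsolvable equation and so obstructs every spin structure. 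Combining the three steps gives the stated equivalence.

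The main obstacle I anticipate is the reduction in the second step: since $q$ is quadratic rather than linear, ``combining equations'' must be carried out through the affine structure on the space of refinements, and one must verify that the scalar obstruction attached to a dependency $\sum_{i\in T}[v_i]=0$ assembles \emph{exactly} into $k+\sum_{i<j}v_i\cdot v_j$ rather than some other combination of the pairwise intersections. A secondary technical point is fixing the framing--to--spin-structure normalization in the first step so that the required value is $q([v_i])=1$; an error there would flip the final congruence.
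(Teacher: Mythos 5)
The paper offers no proof of this statement: it is imported verbatim from Stipsicz \cite{St}, so the only meaningful comparison is with the original argument --- and your outline is essentially that argument. All three of your steps are sound: spin structures on $D^2\times \Sigma_g$ correspond (via Johnson's correspondence) to quadratic refinements $q$ of the mod $2$ intersection form of the fiber; a spin structure extends over the $2$--handle attached along $v_i$ with framing $-1$ relative to the fiber framing if and only if $q([v_i])=1$; and the existence of $q$ with $q([v_i])=1$ for all $i$ is an affine--linear problem over $\Z_2$ whose unsolvability is certified by a dependency $\sum_{i\in T}[v_i]=0$ along which the right-hand sides sum to $1$. Your two flagged worries both resolve favourably. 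For the framing normalization, the sanity check is the degenerate $k=0$ case of the theorem: a null-homologous (separating) vanishing cycle, with $v=0$ the empty sum and $0+0\equiv 0$, must obstruct spin, and indeed $q(0)=0\neq 1$; geometrically, capping off the subsurface bounded by a separating vanishing cycle with the core of the $(-1)$--framed handle produces a closed surface of odd self-intersection, so the intersection form of $X$ is odd. For the assembly of the scalar obstruction, the iterated refinement identity gives $\sum_{i\in T}q(v_i)=q\bigl(\sum_{i\in T}v_i\bigr)+\sum_{i<j,\,i,j\in T}v_i\cdot v_j$, and after singling out $v$ the cross-terms satisfy $\sum_{i=1}^{k}v_i\cdot v = v\cdot v \equiv 0 \pmod 2$ because the mod $2$ intersection form of an orientable surface is alternating; this is exactly why the obstruction assembles into $k+\sum_{1\le i<j\le k}v_i\cdot v_j$ and nothing else. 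Two points worth making explicit in a write-up: the witnessing subset $T$ is necessarily nonempty and singling out any of its elements as $v$ yields the stated format with $k=|T|-1$ (the case $|T|=1$ being the separating-cycle case above); and since the surrounding paper applies the criterion to \emph{achiral} fibrations, you should add the one-line remark that a negative critical point contributes a $(+1)$--framed handle, which imposes the same condition $q([v_i])=1$ because the extension criterion depends only on the framing mod $2$.
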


Here, $v_i \cdot v_j$ denotes the algebraic intersection number between $v_i$ and $v_j$.

	\section{Existence of codimension $2$ proper embedding of $LF(\Sigma,\phi)$} \label{sec3}
	
	To prove Theorem \ref{main theorem} we need the following Lemma.

	\begin{lemma} \label{main lemma}
		Let $\pi: V^4 \rightarrow D^2$ be a Lefschetz fibration. Let $(W^4,\partial W^4)$ be a $4$--manifold which admits a separable Hopf link. There exists a relative LF embedding of $(V, \pi)$ in $(W \times D^2, \pi_2).$ 
	\end{lemma}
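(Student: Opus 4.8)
The plan is to build the relative LF embedding handle-by-handle, mirroring the structure of the Lefschetz fibration itself as described in fact (4) of the list in Section \ref{LF def}. Recall that $V^4 = LF(\Sigma,\phi)$ is obtained from $D^2 \times \Sigma$ by attaching one $2$--handle for each singular point $p_i$ along the vanishing cycle $v_i$ sitting in a fiber, with framing one less (for Lefschetz) or one greater (for achiral) than the surface framing. The target $W \times D^2$ fibers over $D^2$ via $\pi_2$, and over each point the fiber is the $4$--manifold $W$. The strategy is therefore: first embed the trivial piece $D^2 \times \Sigma$ compatibly with $\pi_2$, and then extend the embedding across each $2$--handle so that the fibered structure is preserved and the vanishing-cycle data is realized by the local model $g(z_1,z_2)=z_1z_2$ of Definition \ref{def:standard position}.

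First I would use Lemma \ref{rel GP1}: since $W$ admits a separable Hopf link, there is a proper embedding $\phi_0 : (\Sigma,\partial\Sigma) \hookrightarrow (W,\partial W)$ that is both flexible and in standard position. Taking the product with $D^2$, this gives a proper fibered embedding $\Id_{D^2} \times \phi_0 : D^2 \times \Sigma \hookrightarrow D^2 \times W$, which after swapping factors is a relative LF embedding of the trivial piece $\pi^{-1}(D) = D^2 \times \Sigma$ into $W \times D^2$ satisfying $\pi_2 \circ (\phi_0\times\Id) = \pi$. The key point of the standard-position hypothesis is that over each point $q_i = \partial D \cap \gamma_i$ in the base, the vanishing cycle $v_i$, which by Proposition \ref{lickorish} (together with flexibility) may be assumed not to meet any handle, sits inside the embedded fiber $\phi_0(\Sigma) \subset W$ with a Hopf-annulus neighborhood that is the image of a coordinate chart in which the fiber looks like $g^{-1}(1)$ for $g(z_1,z_2)=z_1 z_2$.

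Next I would extend the embedding over the $2$--handles. A $2$--handle attached along $v_i$ in the fiber over $q_i$ corresponds, in the base $D^2$, to passing over the critical value $\pi(p_i)$; in the local coordinate chart $\phi_\gamma : \C^2 \to N(D^2)$ furnished by standard position, the total space near $p_i$ is modeled on $\{(z_1,z_2) : z_1 z_2 = w\}$ as $w$ ranges over a disk, which is exactly the local model of an (achiral) Lefschetz singularity. Thus the standard-position chart lets me extend the fibered embedding across the handle so that $\pi_2 \circ f_r = \pi$ continues to hold, and the correct (achiral or ordinary) framing is realized by the orientation on the local chart, since the $z_1 z_2$ model produces precisely the $\mp 1$ framing relative to the surface framing. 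Because the chosen $2$--disks bounding the various $v_i$ are disjoint and meet $\phi_0(\Sigma)$ only in their boundary cycles (property (1) of standard position), the handle extensions can be carried out simultaneously without interference.

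The main obstacle I expect is verifying that the local model attaches with the correct framing \emph{and orientation}, i.e.\ that the coordinate chart from the standard-position definition genuinely realizes the $\mp 1$ surface-framing discrepancy demanded by facts (4) and (5), rather than merely producing a topologically attached $2$--handle. One must check that the Hopf-annulus identification $\phi_\gamma^{-1}(\phi_0(\Sigma)\cap N(D^2)) = g^{-1}(1)$ matches the vanishing cycle with the correct sign, and that achiral singularities (negative Dehn twists, framing one greater) are accommodated by allowing the chart $\phi_\gamma$ to be orientation-reversing, consistent with the ALF definition. Once this framing/orientation bookkeeping is done, assembling the global proper embedding and confirming $\pi_2\circ f_r=\pi$ is routine, and properness on the boundary follows from the properness of $\phi_0$ together with the product collar structure of $\partial(W\times D^2)$.
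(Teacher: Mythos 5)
Your overall skeleton agrees with the paper's proof (use Lemma \ref{rel GP1} to get a flexible, standard-position embedding of the fiber, then extend over each critical point via the $z_1z_2$ chart), but there is a genuine gap at the point where you declare that ``assembling the global proper embedding and confirming $\pi_2\circ f_r=\pi$ is routine.'' The product embedding $\Id_{D^2}\times \phi_0$ of the trivial piece cannot simply be ``extended across each $2$--handle'': the relative LF embedding condition must hold over \emph{all} of $D^2$, and the part of $V$ lying over the complement of the critical disks $D_i$ is fibered with nontrivial monodromy. Crossing the critical value $\pi(p_i)$ forces the embedded fibers just after the crossing to differ from those just before by an ambient realization of the Dehn twist $\tau_{\gamma_i}$, and over $\partial D^2$ the embedded fibers must sweep out the mapping torus $\mathcal{MT}(\Sigma,\tau_{\gamma_1}\circ\cdots\circ\tau_{\gamma_k})$, which is not a product. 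The paper deals with exactly this by first constructing a fiberwise embedding $\widehat{f}$ of $V\setminus \sqcup_{i=1}^k\pi^{-1}(D_i)$ into $W\times(D^2\setminus\sqcup_{i=1}^k D_i)$ using flexibility: any $\psi\in\mathcal{MCG}(\Sigma,\partial\Sigma)$ is induced by a relative ambient isotopy $\Phi_t$ of $(W,\partial W)$, so $(x,t)\mapsto(\Phi_t(x),t)$ embeds $\mathcal{MT}(\Sigma,\psi)$ fiberwise in $W\times S^1$; these mapping-torus embeddings are distributed along the arcs joining the base point to the $\partial D_i$. Your proposal has no mechanism for this twisting; relatedly, your parenthetical that each $v_i$ ``may be assumed not to meet any handle'' by Proposition \ref{lickorish} cannot be arranged simultaneously for all vanishing cycles with a single fixed embedding $\phi_0$ --- the corrective isotopies must vary over the base, which is precisely why the embedding of the regular part is not a product.

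By contrast, the framing and orientation bookkeeping you flag as the main obstacle is a non-issue in the paper's formulation: instead of attaching abstract $2$--handles with prescribed framings, the paper extends $\widehat{f}$ over each critical chart by the explicit map $f_{c_i}(z_1,z_2,z_3)=(\iota_{\gamma_i}(z_1,z_2),\phi^{-1}(z_1\cdot z_2+z_3))$, where $\iota_{\gamma_i}$ is the chart from property $(2)$ of Definition \ref{def:standard position}. The fibration structure near $c_i$, and with it the $\mp 1$ framing (achirality being absorbed by allowing the chart to be orientation-reversing, as in the ALF definition), is then inherited automatically from the $z_1z_2$ model; the only thing to verify is that $f_{c_i}\circ i\circ\phi_i$ agrees with $\widehat{f}$ on the overlap, which is what the fiberwise first step is designed to ensure. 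So to repair your argument, replace the product embedding of $D^2\times\Sigma$ by the flexibility-driven fiberwise embedding over $D^2\setminus\sqcup_i D_i$, after which your chart-extension step becomes the paper's proof.
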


	Lemma \ref{main lemma} can been seen as a relative version of Theorem \ref{GP main theorem} and the main arguments behind their proofs are essentially the same. We mostly follow the proof of Theorem \ref{GP main theorem} (Theorem $4.8$, \cite{GP}) and refer to \cite{GP} for further details.

	\begin{proof}[Proof of Lemma \ref{main lemma}]
		
		Let $c_1, c_2,  \cdots c_k$ be the $k$ critical points of the Lefschetz fibration $(V, \pi)$ and let $p_1, p_2, \cdots p_k$ be their images in $D^2$ under the map $\pi$. Let $\gamma_i$ be the vanishing cycle corresponding to the critical value $p_i$ on a generic fiber $\Sigma$ of the LF. Let $(U_i,z_1,z_2)$ be a complex co-ordinate on a disk neighborhood of $c_i$ in $V^4$ such that $\pi$ is given by $(z_1, z_2) \mapsto z_1.z_2.$ Let $\widetilde{D}_i = \pi(U_i) \subset D^2.$ and let $D_i$ be an open disk containing $p_i$ with $\overline{D_i} \subset \widetilde{D}_i$ for $i = 1,\cdots,k$. The $\widetilde{D}_is$ are disjoint.
		
		By Lemma \ref{rel GP1}, we can take a proper embedding $\iota$ of $(\Sigma,\partial \Sigma)$ in $(W^4,\partial W^4)$ which is both flexible and is in standard position. Using the flexibility of the embedding $\iota,$ we construct an embedding $\widehat{f}$ of
		$V \setminus \sqcup_{i=1}^{k} \pi^{-1}(D_i)$ in $ W \times \left( D^2 \setminus \sqcup_{i=1}^k D_i\right)$ such that the following diagram commutes:

		\begin{equation}
		\begin{tikzcd}
		V \setminus \sqcup_{i=1}^{k} \pi^{-1}(D_i) \arrow[r, "\widehat{f}"] \arrow[d,"\pi" ] &  W \times \left( D^2 \setminus \sqcup_{i=1}^k D_i\right) \arrow[d,"\pi_2"]  \\
		D^2 \setminus \sqcup_{i=1}^{k}D_i \arrow[r, "Id"] & D^2 \setminus \sqcup_{i=1}^{k}D_i.
		\end{tikzcd}
		\end{equation}

		Since the embedding $\iota$ of $(\Sigma,\partial \Sigma)$ in $(W,\partial W)$ is standard, by Lemma~\ref{rel GP1}, there exists an embedding $\Psi$ of the mapping torus $\mathcal{M}T (\Sigma, \psi)$ in $M  \times S^1$, for all $\psi \in \mathcal{M}CG(\Sigma,\partial \Sigma)$, such that
		the following diagram commutes:

		\begin{equation}
		\begin{tikzcd}
		\mathcal{M}T(\Sigma, \psi) \arrow[r, "\Psi" ] \arrow[d, "\pi"] & W \times
		S^1 \arrow[d]  \\
		S^1 \arrow[r, "Id"] & S^1 .
		\end{tikzcd}
		\end{equation}

		Considering $\partial D_i = S^1$, diagram $(2)$ implies that there is an embedding of $\mathcal{M}T(\Sigma, \tau_{\gamma_i})$ in $W \times \partial D_i,$ where $\tau_{\gamma_i}$ denotes the Dehn twist along the vanishing cycle $\gamma_i.$  Now take arcs connecting a point on $\partial D_i$ to a fixed regular value point $p \in D^2$ of the map $\pi$ as depicted in Figure~\ref{pic2}.

		\begin{figure}[htbp]
			\centering
			\def\svgwidth{10cm}
			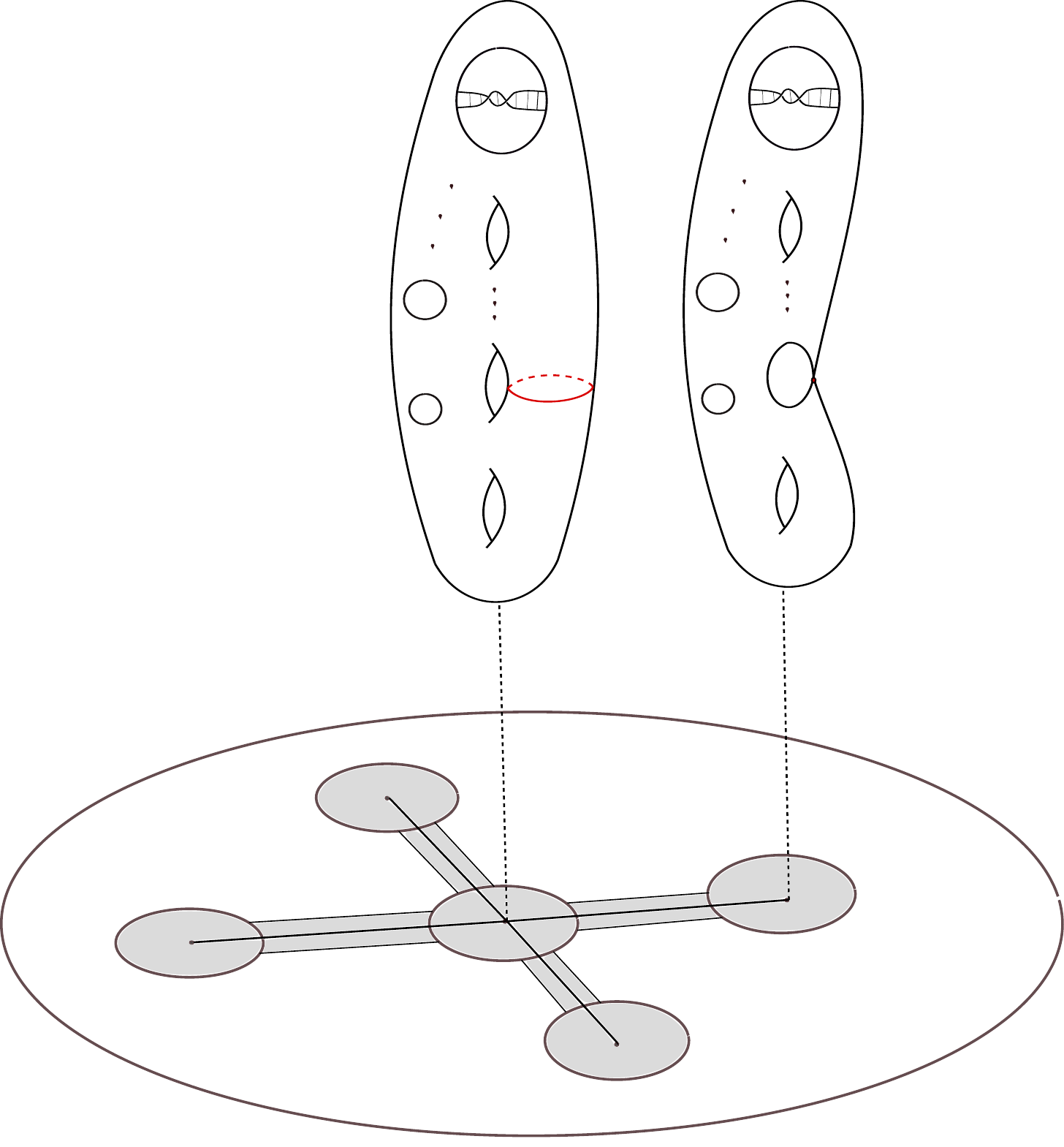
    		\caption{The figure depicts a particular case of a Lefschetz fibration $(V, \pi)$ over a disk with $4$ critical points, embedded as Lefschetz fibration  in the $W \times D^2$. The embedding is such that a generic fiber of $(V, \pi)$ has a flexible embedding in standard position in $W$. The curve $\gamma$ depicts a vanishing.}
			\label{pic2}
		\end{figure}
		
		\noindent Note that the Lefschetz fibration $(V, \pi)$ restricted to  a regular neighborhood $N$ of $D_i$'s together with arcs connecting them satisfies the following:
		
		\begin{enumerate}
			\item   $\pi^{-1} (\partial D_i)$ is the mapping torus 
			$\mathcal{M}T(\Sigma, \tau_{\gamma_i}).$
			
			\item  $V$ restricted to $\partial N$ is the mapping torus $\mathcal{M}T(\Sigma, \tau_{\gamma_1}\circ \tau_{\gamma_2}\circ \cdots \circ \tau_{\gamma_k}).$

		\end{enumerate}
		\noindent Thus, we get the required embedding $\widehat{f}$ such that diagram $(1)$ commutes.

		Next we show how to extend this embedding to produce a relative LF embedding $f$ of $(V,\partial V)$ in $W \times D^2.$ For this we need to use the fact that the embedding $\iota$ of $(\Sigma,\partial \Sigma)$ is in standard position. In particular, there exists an embedding $\iota_{\gamma_i}:\C^2 \hookrightarrow W$ which satisfies the second property listed in Definition~\ref{def:standard position}. Moreover, for each critical point $c_i$, the following commutative diagram holds: 
		
		\begin{equation}\label{diag:commutative_local_maps}
		\begin{tikzcd}
		U_i\subset V \arrow{r}{\phi_i}\arrow{d}{\pi}
		&\C^2\arrow{r}{i}\arrow{d}{g}
		&\C^2\times\C \arrow{r}{f_{c_i}}\arrow{d}{P}
		& W \times D^2 \arrow{d}{\pi_2}\\
		\widetilde{D}_i\arrow{r}{\phi}&\C\arrow{r}{Id}&\C \arrow{r}{\phi^{-1}}&\widetilde{D}_i ,\end{tikzcd}
		\end{equation}
		\noindent where the definitions of the maps appearing in the diagram are as follows:
		\begin{enumerate}
			\item $\phi_i:U_i\subset V \to \C^2$ and $\phi: \widetilde{D}_i\subset D^2 \to \C$ are orientation preserving parameterizations around critical point $c_i$ of $\pi$ and $\pi(c_i)$ respectively such that left square commutes in the diagram above,
			\item $i:\C ^2 \to \C^2 \times \C$ and $g:\C^2 \to \C$ are defined as
			$i(z_1,z_2)=(z_1,z_2,0)$ and $g(z_1,z_2)=z_1.z_2$,
			\item $f_{c_i}:\C^2\times\C \to  \times D^2$ and $P:\C^2\times\C \to \C$ are defined as
			
			\noindent $f_{c_i} (z_1,z_2,z_3)=(\iota_{\gamma_i}(z_1,z_2),\phi^{-1}(z_1.z_2+z_3))$ and $P(z_1,z_2,z_3)=z_1.z_2+z_3$.
		\end{enumerate}
		\noindent The commutativity of the middle square follows from definitions of  maps $g,i$ and $P$ and the commutativity of the last square follows from the definition of the map  $f_{c_i}$. Note that the commutative diagram~\ref{diag:commutative_local_maps} allows us to extend the embedding $\widehat{f}$ to the embedding $\widehat{f}_{c_i}$ of $ M_{c_i} = V \setminus (\sqcup_{i=1}^{k} \pi^{-1}(D_i) \cup U_i)$, because $\widehat{f}$ and $f_{c_i}\circ i\circ\phi_i$ agree on the overlapping region of the domain. Hence, $\widehat{f}$ and $f_{c_i}\circ i\circ\phi_i$ together defines  a map $\widehat{f}_{c_i}$. Moreover, $\widehat{f}_{c_i}$ satisfies the following commutative diagram :

		\begin{equation}
		\begin{tikzcd}
		M_{c_i} \arrow[r,"\widehat{f}_{c_i}"] \arrow[d, "\pi"] & \widehat{f}_{c_i}(M_{c_i}) \subset W \times D^2 \arrow[d, "\pi_2"] \\
		\pi(M_{c_i}) \subset D^2 \arrow[r, "Id"] & \pi_2(\widehat{f}_{c_i}) = \pi(M_{c_i}).   
		\end{tikzcd}
		\end{equation}

		Finally, we observe that by construction the embeddings $\widehat{f}_{c_i}$ and $\widehat{f}_{c_j}$ agree on $M_{c_i} \cap M_{c_j}.$ Since $V = \cup_{i=1}^k  M_{c_i}$, we get the required proper embedding $f$ of $V$ in $W \times D^2$.

	\end{proof}

\begin{proof}[Proof of Theorem \ref{main theorem}]
	Let $W = S^2 \times S^2 \setminus D^4$. Note that $W$ contains a separable Hopf link. The proof then follows from Lemma \ref{main lemma}.
\end{proof}

	\section{Every closed orientable $4$-manifold embeds in $S^2 \times S^2 \times S^2$} \label{sec3.5}
	
	The following result due to Etnyre--Fuller \cite{EF}(Proposition $12$, \cite{EF}) is our key ingredient.
	
	\begin{theorem}[Etnyre--Fuller]\label{EF}
	Let $X^4$ be a closed orientable $4$--manifold. Then we may write $X = Y_1 \cup Y_2,$ where $Y_i$ is a $2$--handlebody which admits an achiral Lefschetz fibration over $D^2$ with bounded fibers of the same genus, and with the induced open books on $\partial Y_1 = - \partial Y_2$ coinciding.
	\end{theorem}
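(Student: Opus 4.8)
The plan is to obtain the splitting from Morse theory and then upgrade each half to an achiral Lefschetz fibration. First I would take a self-indexing Morse function $f\colon X\to[0,4]$ and cut along the middle level $M=f^{-1}(2)$, setting $Y_1=f^{-1}([0,2])$ and $Y_2=f^{-1}([2,4])$. Then $Y_1$ is built from a $0$--handle together with $1$-- and $2$--handles, so it is a $2$--handlebody, and turning $Y_2$ upside down trades its index-$3$ and index-$4$ handles for dual $1$-- and $0$--handles, exhibiting $Y_2$ as a $2$--handlebody as well. By construction $X=Y_1\cup_M Y_2$ with $\partial Y_1=-\partial Y_2=M$ a closed orientable $3$--manifold. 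It then remains to equip each $Y_i$ with an achiral Lefschetz fibration over $D^2$ having bounded fibers and to arrange that the two fibrations share the fiber genus and induce the same open book on $M$.

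The heart of the argument is the claim that every $4$--dimensional $2$--handlebody admits an achiral Lefschetz fibration with bounded fiber. To prove it I would first realize the sub-handlebody consisting of the $0$--handle and the $1$--handles: a boundary connected sum $\natural^{2g}(S^1\times D^3)$ is diffeomorphic to $\Sigma_{g,1}\times D^2$, which is the critical-point-free Lefschetz fibration over $D^2$ with fiber $\Sigma_{g,1}$ inducing the trivial open book on $\#^{2g}(S^1\times S^2)$ (when the number of $1$--handles is odd one first introduces a cancelling $1$--$2$ handle pair). The $2$--handles of $Y_i$ are attached along a framed link $L$ in $\#^{2g}(S^1\times S^2)=\partial(\Sigma_{g,1}\times D^2)$; after stabilizing the trivial open book (equivalently, plumbing on Hopf bands, which raises $g$ but leaves the diffeomorphism type unchanged) I would isotope each component of $L$ so that it sits as an embedded simple closed curve on a single page, turning it into a candidate vanishing cycle. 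I would then attach a Lefschetz $2$--handle along each such curve, using the local models of Section~\ref{LF def}: a positive singularity realizes the framing one less than the page framing, an achiral singularity realizes the framing one greater, and any intermediate framing is reached by further stabilizing the open book near the curve to adjust its page framing before attaching. This framing-matching step is exactly where achirality is indispensable and is the main obstacle of the proof, since positive singularities alone cannot reach every integer framing; the availability of both signs is what makes the construction go through for an arbitrary $2$--handlebody.

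With achiral Lefschetz fibrations on $Y_1$ and $Y_2$ in hand, the two induced open books on $M$ will in general have different pages and monodromies, so the final task is to reconcile them. Here I would use that a stabilization of the boundary open book (of either sign) is realized at the $4$--dimensional level by attaching a cancelling pair of Lefschetz handles to the fibration: this leaves the diffeomorphism type of $Y_i$ unchanged, raises the fiber genus by one, and stabilizes the induced open book on $M$. Since any two open books supporting the same closed orientable $3$--manifold become isotopic after finitely many stabilizations once both signs are permitted, I would stabilize the two fibrations until the open books induced on $M$ coincide, simultaneously equalizing the genera. Carrying this out yields the decomposition $X=Y_1\cup_M Y_2$ with $\partial Y_1=-\partial Y_2$, each $Y_i$ a $2$--handlebody carrying an achiral Lefschetz fibration over $D^2$ with bounded fibers of the same genus, and with coinciding induced open books, as required.
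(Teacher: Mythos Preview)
The paper does not prove Theorem~\ref{EF}; it is quoted verbatim from Etnyre--Fuller \cite{EF} (their Proposition~12) and used as a black box in the proof of Corollary~\ref{corollary}. There is therefore no in-paper proof to compare your proposal against.

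That said, your outline is a faithful reconstruction of the Etnyre--Fuller strategy: split $X$ along the middle level of a self-indexing Morse function, realise each half as an achiral Lefschetz fibration via the Harer/Akbulut--Ozbagci theorem that any $4$--dimensional $2$--handlebody admits such a structure, and then reconcile the two boundary open books by common stabilisation. The two steps where you are silently invoking nontrivial external results are (i) that after sufficiently many positive and negative Hopf plumbings every component of the $2$--handle attaching link can be isotoped onto a page with the prescribed framing realised as page-framing $\pm 1$ --- this is Harer's theorem in the form made explicit by Akbulut--Ozbagci --- and (ii) that any two open books on a fixed closed oriented $3$--manifold become isotopic after finitely many stabilisations of both signs. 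Both facts are correct and are precisely what Etnyre--Fuller assemble; in a written proof you would want to cite them rather than re-derive them. With those citations in place your argument is complete.
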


    \begin{proof}[Proof of Corollary \ref{corollary}]
    	
    	Let $X^4$ be a closed oriented $4$--manifold. By Theorem \ref{EF}, there exist two achiral Lefschetz fibrations $Y_1$ and $Y_2$ such that $\partial Y_1 = - \partial Y_2$ and $X^4 = Y_1 \cup_\partial Y_2$. Moreover, the gluing map between $\partial Y_1$ and $\partial Y_2$ preserves the induced open book structure. By Theorem \ref{main theorem}, $(Y_i,\partial Y_i)$ admits relative LF embedding in $(S^2 \times S^2 \setminus D^4) \times D^2$ for $i = 1,2$. Note that the induced open book on $\partial Y_i$ admits an \emph{open book embedding} in $\mathcal{O}b((S^2 \times S^2 \setminus D^4),id) = \partial ((S^2 \times S^2 \setminus D^4) \times D^2).$ Since the embedding of $\Sigma$ in $(S^2 \times S^2 \setminus D^4)$ is flexible, the gluing map between $\partial Y_1$ and $\partial Y_2$ can be induced by an isotopy of $\mathcal{O}b(S^2 \times S^2 \setminus D^4, id) = \partial ((S^2 \times S^2 \setminus D^4) \times D^2).$ To see this, let $u$ be a homeomorphism between $\partial Y_1$ and $\partial Y_2$ such that $u$ restricts to a homeomorphism from a page of $\partial Y_1$ to a page of $\partial Y_2.$ Let $\Phi_t$ be a relative isotopy of $((S^2 \times S^2 \setminus D^4) \times D^2,\partial ((S^2 \times S^2 \setminus D^4) \times D^2))$ such that $\Phi_0 = id$ and $\Phi_1$ induces the map $u$ on the embedded page of $\partial Y_1$. We can extend $\Phi_t$ to an isotopy $\hat{\Phi}_t$ of $\mathcal{O}b(S^2 \times S^2 \setminus D^4,id).$ Note that $\hat{\Phi}_1|_{\partial Y_1} = u.$ Therefore, $X^4 = Y_1 \cup_\partial Y_2$ admits embedding in $(S^2 \times S^2 \setminus D^4) \times D^2 \cup_{\partial,id} (S^2 \times S^2 \setminus D^4) \times D^2 \subset S^2 \times S^2 \times D^2 \cup_{\partial,id} S^2 \times S^2 \times D^2 = S^2 \times S^2 \times S^2$.
    	
    \end{proof}

	\section{Relative LF embedding of $LF(\Sigma_{g,1},\phi)$ in $D^6$} \label{sec4}

	\begin{figure}[htbp]
		\centering
		\def\svgwidth{8cm}
\begingroup%
  \makeatletter%
  \providecommand\color[2][]{%
    \errmessage{(Inkscape) Color is used for the text in Inkscape, but the package 'color.sty' is not loaded}%
    \renewcommand\color[2][]{}%
  }%
  \providecommand\transparent[1]{%
    \errmessage{(Inkscape) Transparency is used (non-zero) for the text in Inkscape, but the package 'transparent.sty' is not loaded}%
    \renewcommand\transparent[1]{}%
  }%
  \providecommand\rotatebox[2]{#2}%
  \newcommand*\fsize{\dimexpr\f@size pt\relax}%
  \newcommand*\lineheight[1]{\fontsize{\fsize}{#1\fsize}\selectfont}%
  \ifx\svgwidth\undefined%
    \setlength{\unitlength}{562.90097762bp}%
    \ifx\svgscale\undefined%
      \relax%
    \else%
      \setlength{\unitlength}{\unitlength * \real{\svgscale}}%
    \fi%
  \else%
    \setlength{\unitlength}{\svgwidth}%
  \fi%
  \global\let\svgwidth\undefined%
  \global\let\svgscale\undefined%
  \makeatother%
  \begin{picture}(1,0.34106222)%
    \lineheight{1}%
    \setlength\tabcolsep{0pt}%
    \put(0,0){\includegraphics[width=\unitlength,page=1]{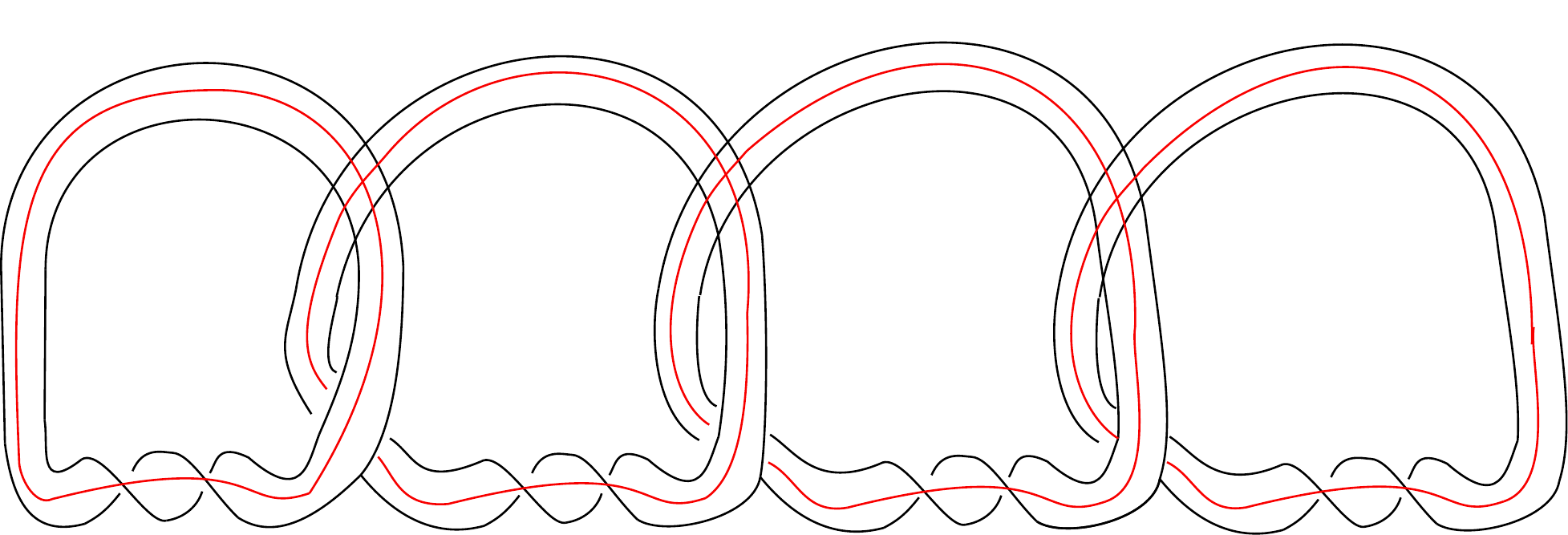}}%
    \put(0.08993649,0.31760314){\color[rgb]{0,0,0}\makebox(0,0)[lt]{\lineheight{1.25}\smash{\begin{tabular}[t]{l}$b_1$\end{tabular}}}}%
    \put(0.32030187,0.31511272){\color[rgb]{0,0,0}\makebox(0,0)[lt]{\lineheight{1.25}\smash{\begin{tabular}[t]{l}$a_1$\end{tabular}}}}%
    \put(0.57059074,0.32756489){\color[rgb]{0,0,0}\makebox(0,0)[lt]{\lineheight{1.25}\smash{\begin{tabular}[t]{l}$c_1$\end{tabular}}}}%
    \put(0.82586049,0.32507444){\color[rgb]{0,0,0}\makebox(0,0)[lt]{\lineheight{1.25}\smash{\begin{tabular}[t]{l}$a_2$\end{tabular}}}}%
  \end{picture}%
\endgroup%

		\caption{The embedding $f_0$ for $g = 2.$}
		\label{picl}
	\end{figure}

	\begin{proof}[Proof of Theorem \ref{theorem 2}]
		
		Recall the Whitney sum formula for direct sum of vector bundles : $w_k(E_1 \oplus E_2) = \Sigma_{i+j=k}w_i(E_1) \cup w_j(E_2)$.
		
			\begin{enumerate}
			\item If $V^4$ embeds in $D^6$ via $h$, then the normal bundle of $h$ is a trivial disk bundle over $V$. Therefore, $TV \oplus \epsilon^2_V = h^*(TD^6)$. Thus, $w_2(V) = 0$.
			
			\item  Note that $\Sigma_{g,1}$ is homeomorphic to plumbing of $2g$ Hopf annulii in a chain, as in Figure \ref{picl}. Let us denote this embedding of $\Sigma_{g,1}$ in $S^3$ by $f_0$. We can modify this embedding using the notations used in the proof of Lemma \ref{main lemma}. We embed $\Sigma_{g,1}$ by $f_0$ in $S^3 \times \frac{3}{2} \subset S^3 \times [1,2] \subset D^4$ and attach the cylinder $\partial \Sigma_{g,1} \times [\frac{3}{2}, 2]$ to get a proper embedding $\tilde{f}_0$ of $(\Sigma_{g,1}, \partial \Sigma_{g,1})$ in $(D^4, \partial D^4)$. We claim that $\tilde{f}_0$ is flexible and is in standard position.

		The claim is true for the following reason. In the proof of Lemma \ref{rel GP1}, the existence of separable Hopf link was assumed so that every curve that is either a Lickorish generator or does not intersect a handle, should have a neighborhood isotopic to the Hopf annulus in $S^3$. Here, we already have a monodromy that factors into Dehn twists along the curves : $\{b_1, a_1, c_1,\cdots a_g, b_g \}$, and each of these curves has a Hopf annulus neighborhood under $\tilde{f}_0$. Thus, $\tilde{f}_0$ is both flexible and in standard position. One can then procced exactly as in the proof of Lemma \ref{main lemma} to conclude.   
			
		\end{enumerate}
	
	\end{proof}

\subsection{Obstruction to LF embedding in $D^6$}The criteria of Stipsciz for non-spin ALF with closed fibers gives obstructions to existence of relative LF embeddings in $D^6$. For example, let $V^4 = LF(\Sigma_{g,1}, \phi)$ be an achiral Lefschetz fibration that has a separable curve as vanishing cycle. The double of $\Sigma_{g,1}$ is a closed surface of genus $2g$, say $\Sigma_{2g}$. Let $\tilde{\phi}$ be the extended monodromy on $\Sigma_{2g}$ (by taking union). Then by Theorem \ref{stipsicz}, $LF(\Sigma_{2g}, \tilde{\phi})$ is not spin. Thus, by statement $(1)$ of Theorem \ref{theorem 2}, $LF(\Sigma_{2g}, \tilde{\phi})$ cannot embed in $\mathbb{R}^6$. But if $LF(\Sigma_{g,1}, \phi)$ properly embeds in $D^6$, then $LF(\Sigma_{2g}, \tilde{\phi})$ embeds in $S^6$. Which implies that $LF(\Sigma_{2g}, \tilde{\phi})$ embeds in $\mathbb{R}^6$, which is a contradiction. Therefore, $LF(\Sigma_{g,1}, \phi)$ does not admit a relative LF embedding in $D^6$. 

Similar examples can be found with achiral Lefschetz fibrations having non-separable vanishing cycles. In particular, refer to Figure \ref{intropic} and consider $ V_0^4 = LF(\Sigma_{g,1}, \tau_{b_1}\circ \tau_{c_1} \circ \tau_{b_2})$. Then taking $k =2$, $v = b_2$, $v_1 = b_1$ and $v_2 = c_1$ as in Theorem \ref{stipsicz}, we see that $V_0^4$ does not admit a relative LF embedding in $D^6$.

\bibliographystyle{amsplain}

\end{document}